\newtheorem{theo}{Theorem}[section]
\newtheorem{cor}[theo]{Corollary}
\newtheorem{prop}[theo]{Proposition}
\newtheorem{definition}[theo]{Definition}
\newtheorem{remark}[theo]{Remark}
\newtheorem{example}[theo]{Example}
\def\e{{\epsilon}}
\def\a{{\alpha}}
\def\b{{\beta}}
\newcommand{\B}{{\mathbb{B}}}
\newcommand{\C}{{\mathbb{C}}}
\newcommand{\CC}{{\mathcal{C}}}
\newcommand{\R}{{\mathbb{R}}}
\newcommand{\BS}{\mathbb{S}}
\newcommand{\BD}{\mathbb{D}}
\newcommand{\D}{\mathbb{D}}
\begin{document}
\title[]{Milnor-L\^e type fibrations for subanalytic maps}

\author{Rafaella de Souza Martins}
\author{Aur\'elio Menegon}

%\thanks{Partial support from CNPq (Brazil).}
\subjclass[2010]{Primary 32B20, 14P15, 32S55, 32S99.}
\date{31-05-2018}
\address{Rafaella de Souza Martins: Universidade de S\~ao Paulo - Brazil.}
\email{rafaella.souza.martins@usp.br}
\address{Aur\'elio Menegon: Universidade Federal da Para\'iba - Brazil.}
\email{aurelio@mat.ufpb.br}

\begin{abstract}
We prove a Milnor-L\^e type fibration theorem for a subanalytic map $f: X \to Y$ between subanalytic sets $X \subset \R^m$ and $Y \subset \R^n$. Moreover, if $f$ extends to an analytic map $\R^m \to \R^n$, we define the singular set and the discriminant set of $f$, in a stratified sense. Then we give Milnor-L\^e type fibration theorems for $f$ outside its discriminant, as well as over it. We give examples of each situation approached in this paper.
\end{abstract}
\maketitle

%%%%%%%%%%%%%%%%%%%%%%%%%%%%%%%%%%%%%
\medskip
\section{Introduction}  
\label{section_1}
\medskip

The topological behavior of the non-critical levels of an analytic map near a critical point has been studied by many authors. It probably started with Milnor's work in \cite{Mi}, where he showed that if $f: (\C^n,0) \to (\C,0)$ is a holomorphic function-germ then there exist $\e$ and $\eta$ sufficiently small, with $0<\eta \ll \e$, such that the restriction:
$$f_|: f^{-1}(\BD_\eta \backslash \{0\}) \cap \B_\e \to \BD_\eta \backslash \{0\}$$
is the projection of a smooth locally trivial fibration, where $\B_\e$ denotes the closed ball of radius $\e$ around $0 \in \C^n$ and $\BD_\eta$ denotes the closed disk of radius $\eta$ around $0 \in \C$. 

Milnor then described the topology of the fiber $F_f$ of such fibration, which is now called the Milnor fiber of $f$ at $0$. One can obtain from $F_f$ some important invariants of the germ of the hypersurface $V_f = f^{-1}(0)$ at $0 \in \C^n$, like the Milnor number if $f$ has an isolated critical point, or the L\^e numbers (\cite{Ma}) in the general case. Latter, H. Hamm (\cite{Ha}) and D.T. L\^e (\cite{Le}) proved a fibration theorem as above for a complex function-germ defined in a complex analytic set $X$.

In the real setting, there is also a Milnor-L\^e type fibration theorem for map-germs $f: (\R^m,0) \to (\R^n,0)$ of class $C^1$, provided that $f$ satisfies some transversality condition (see \cite{CMSS}). In this case, there is a differentiable locally trivial fibration:
$$f_|: f^{-1}(\BD_\eta \backslash \Delta_f) \cap \B_\e \to (\BD_\eta \backslash \Delta_f) \cap f(\R^m) \, ,$$
where $\Delta_f$ is the discriminant set of $f$, which is defined as the image of the critical set of $f$. 

This leads us to the question of what happens over the discriminant set of $f$. What does the topological behavior of the critical levels of $f$ tell about $V(f)$? The first step in this direction is to give a fibration theorem for $f$ over $\Delta_f$. This is what we envisage in this paper.

The main difficulty lies in the fact that the discriminant $\Delta_f$, in general, is not an analytic set. So we must enter into the realm of the subanalytic sets. In Section 2, we briefly recall some basic definitions and results about subanalytic sets and Whitney stratifications. 

In Section 3, we consider a subanalytic map $f: X \to Y$ between subanalytic sets $X \subset \R^m$ and $Y \subset \R^n$, with $m \geq n$, such that $f$ extends to a continuous map $\tilde{f}: \R^m \to \R^n$. We prove:

\begin{theo} \label{theo_1}
Let $f: X \to Y$ be a subanalytic map as above. Let $B \subset \R^m$ be a compact subanalytic set and let $Z \subset \R^n$ be a subanalytic set contained in $f(X \cap B)$, with $\dim Z>0$. Then there exist subsets $W_0, \dots, W_l$, with $1 \leq l \leq \dim Z$, such that: 
\begin{itemize} 
\item[$(i)$] $W_0 \subset W_1 \subset \dots \subset W_{l-1} \subset W_l = Z$;
\item[$(ii)$] $W_i$ is subanalytic in $\R^n$, for each $i=0, \dots, l$;
\item[$(iii)$] $\dim W_{i-1} < \dim W_i$, for each $i=1, \dots, l$;
\item[$(iv)$] $W_i \setminus W_{i-1}$ is a smooth subset of $\R^n$ and subanalytic in $\R^n$, for each $i=1, \dots, l$;
\item[$(v)$] The restriction:
$$f_|: f^{-1}(W_i \backslash W_{i-1}) \cap B \to W_i \backslash W_{i-1}$$
is the projection of a topological locally trivial fibration, for each $i=1, \dots, l$. 
\end{itemize}
\end{theo}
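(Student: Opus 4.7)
The plan is to apply Thom's first isotopy lemma to a Whitney stratification of the extended map $\tilde f$ that is simultaneously compatible with $B$ in the source and $Z$ in the target, and then to produce the filtration $W_0\subset\cdots\subset W_l=Z$ by grouping the target strata according to their dimension.

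First, using the stratification theorems for subanalytic maps recalled in Section~2, one produces subanalytic Whitney stratifications $\mathcal U$ of $\mathbb R^m$ and $\mathcal V$ of $\mathbb R^n$ such that $X$, $B$, $X\cap B$ and $f^{-1}(Z)\cap B$ are unions of strata of $\mathcal U$, while $Z$ is a union of strata of $\mathcal V$, and such that $\tilde f$ is a stratified map with respect to $(\mathcal U,\mathcal V)$: every stratum $S\in\mathcal U$ is mapped into a single stratum of $\mathcal V$, and the restriction $\tilde f|_S$ is a smooth submersion onto that stratum.

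Next, one lists the distinct dimensions of the strata of $\mathcal V$ contained in $Z$ in increasing order and defines $W_i$ to be the union of the strata of $Z$ whose dimension is at most the $i$-th such value, taking $W_0=\emptyset$ when necessary so that $\dim W_0<\dim W_1$. Properties (i)--(iv) are then immediate: each $W_i$ is a finite union of subanalytic strata, hence subanalytic (ii); the chain of inclusions with $W_l=Z$ and the strict dimension increase are built into the construction, giving (i) and (iii); and the frontier condition of Whitney stratifications ensures that $W_i\setminus W_{i-1}$ is the disjoint union of the strata of $\mathcal V$ in $Z$ of a single common dimension, hence a smooth subanalytic submanifold of $\mathbb R^n$ (iv). The bound $l\le\dim Z$ follows by counting the possible stratum dimensions.

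For property (v), fix $i\in\{1,\dots,l\}$ and set $V:=W_i\setminus W_{i-1}$. Its preimage $f^{-1}(V)\cap B$ is a union of strata of $\mathcal U$ contained in the compact set $B$, so the restriction
\[
f_|\,:\,f^{-1}(V)\cap B\longrightarrow V
\]
is proper, and by construction it is a stratified submersion, each source stratum mapping smoothly and submersively onto the stratum of $V$ into which it is sent. Thom's first isotopy lemma then gives a topological local trivialization of $f_|$ over every stratum of $V$. Since the strata of $\mathcal V$ making up $V$ all have the same dimension, they are open in $V$, and the stratum-by-stratum local trivializations together give a local trivialization of $f_|$ as a map into $V$, proving (v). The delicate part of the argument is the first step: producing a single Whitney (or Thom--Mather) stratification of $\tilde f$ simultaneously compatible with all of the subanalytic data $(X,B,f^{-1}(Z)\cap B,Z)$. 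This is where the subanalytic regularity recalled in Section~2 is crucial; once such a stratification is in place, the rest is a routine application of the first isotopy lemma plus an indexing of the target strata by dimension.
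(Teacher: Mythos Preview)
Your argument is essentially correct, but it takes a different route from the paper's. The paper does not build a single Thom--Mather stratification of the map; instead it proves a one--step result (Proposition~\ref{prop_2}): given $Z$, there is a subanalytic $W\subset Z$ with $\dim W<\dim Z$ such that $Z\setminus W$ is smooth and $f_|:f^{-1}(Z\setminus W)\cap B\to Z\setminus W$ is a locally trivial fibration. This is obtained by first removing the low--dimensional strata of a (w)--stratification of $Z$, then applying Verdier's Bertini--Sard theorem (Proposition~\ref{teosubmersaodosestratos}) to get an open dense $U$ over which $f$ is transverse to a stratification of $f^{-1}(\cdot)\cap B$, and finally lifting vector fields via Proposition~\ref{levantamentodecampor}. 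Theorem~\ref{theo_1} is then literally the sentence ``Applying Proposition~\ref{prop_2} repeatedly, we get Theorem~\ref{theo_1}.'' Your approach packages this iteration into one global stratification of the map and invokes the first isotopy lemma; it is cleaner conceptually and yields the whole filtration at once, whereas the paper's approach stays strictly within the specific Verdier tools quoted in Section~2.

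Two points to tighten. First, the ``stratification theorems for subanalytic maps recalled in Section~2'' do \emph{not} include the existence of a Whitney stratification of $\tilde f$ with $\tilde f|_S$ a submersion onto a target stratum for every $S$; Section~2 only gives (w)--stratifications of sets (Proposition~\ref{estratificacao}), lifting of vector fields (Proposition~\ref{levantamentodecampor}), and generic transversality over a dense open (Proposition~\ref{teosubmersaodosestratos}). The result you need is standard (Hardt, Shiota I.2.6, or Thom--Mather for subanalytic maps), and note that $\tilde f$ is only assumed continuous, so you should really be stratifying $f_|:X\cap B\to\R^n$ as a proper subanalytic map rather than $\tilde f$; but you should cite it explicitly rather than attribute it to Section~2. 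Second, your bound $l\le\dim Z$ is not quite right as stated: if $Z$ has strata of every dimension $0,1,\dots,\dim Z$, your construction with $W_0=\emptyset$ gives $l=\dim Z+1$. The fix is to let $W_0$ be the union of the $0$--dimensional strata (the theorem requires nothing of $f$ over $W_0$), which restores $l\le\dim Z$.
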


%
\begin{comment}
As an immediate consequence, we have:
\begin{cor}
Let $f: \R^m \to \R^n$ be an analytic map and let $\B$ be a closed ball around $0$ in $\R^m$. Suppose that the set $\Delta_{f_\B} := \Delta_f \cap f(\B)$ has positive dimension. Then there exists a subset $W \subset \Delta_{f_\B}$, subanalytic in $\R^n$, with $\dim W < \dim \Delta_{f_\B}$, such that $\Delta_{f_\B} \backslash W$ is a smooth subset of $\R^n$, subanalytic in $\R^n$, and such that the restriction:
$$f_|: f^{-1}(\Delta_{f_\B} \backslash W) \cap \B \to \Delta_{f_\B} \backslash W$$
is the projection of a topological locally trivial fibration.
\end{cor}
\end{comment}
%

In Section 4, we consider a subanalytic map $f: X \to Y$ between subanalytic sets $X \subset \R^m$ and $Y \subset \R^n$, with $m \geq n$, such that $f$ extends to an analytic map $\tilde{f}: \R^m \to \R^n$. We define the singular set and the discriminant set of $f$ in a stratified sense (Definition \ref{definition_1}). Then we give a stratified Milnor-L\^e type fibration theorem for $f$ outside its discriminant (Theorem \ref{theo_2}), as well as a Milnor-L\^e type fibration theorem for $f$ over its discriminant (Theorem \ref{theo_3}). In the particular case when $Y$ is the Euclidian space $\R^n$, we get:

\begin{theo} \label{theo_main}
Let $X \subset \R^m$ be a subanalytic set and let $f: X \to \R^n$ be a subanalytic map that extends to an analytic map $\tilde{f}: \R^m \to \R^n$. If $B \subset \R^m$ is a compact subanalytic set, then:
\begin{itemize}
\item[$(i)$] The restriction:
$$f_|: f^{-1}(\R^n \setminus \Delta_{f_B}) \cap B  \to  (\R^n \setminus \Delta_{f_B}) \cap f(X \cap B) $$
is the projection of a topological locally trivial fibration, where $\Delta_{f_B}$ is the discriminant set of $f$ in the stratified sense (see Definition \ref{definition_1}). \item[$(ii)$] Moreover, if $\Delta_{f_B}$ has positive dimension then there exists a subset $W \subset \Delta_{f_B}$, subanalytic in $\R^n$, with $\dim W < \dim \Delta_{f_B}$, such that $\Delta_{f_B} \backslash W$ is a smooth subset of $\R^n$, subanalytic in $\R^n$, and such that the restriction:
$$f_|: f^{-1}(\Delta_{f_B} \backslash W) \cap B \to \Delta_{f_B} \backslash W$$
is the projection of a topological locally trivial fibration.
\end{itemize}
\end{theo}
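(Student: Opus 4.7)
The plan is to deduce both parts from results established earlier in Section~4 together with Theorem~\ref{theo_1}. Part~(i) is essentially the special case of Theorem~\ref{theo_2} (the stratified Milnor--L\^e fibration outside the discriminant) in which the target is the smooth manifold $Y = \R^n$. Concretely, I would fix compatible Whitney subanalytic stratifications of $\R^m$ refining $X$, $B$ and the stratified singular set of $f$, together with a stratification of $\R^n$ refining the discriminant $\Delta_{f_B}$. Then $\R^n \setminus \Delta_{f_B}$ is the union of the open top-dimensional strata of $\R^n$, over which $\tilde f$ is tautologically a stratified submersion, this being the very definition of the stratified discriminant in Definition~\ref{definition_1}. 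Thom's first isotopy lemma, applied to the proper restriction of $\tilde f$ to the compact subanalytic set $B$, yields the topological local triviality asserted in~(i) over $(\R^n \setminus \Delta_{f_B}) \cap f(X \cap B)$.

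For part~(ii) I would apply Theorem~\ref{theo_1} directly with $Z := \Delta_{f_B}$. First I would observe that $\Delta_{f_B}$ is a subanalytic subset of $\R^n$: the stratified singular set of $f$ is closed and subanalytic in the compact set $B$, so its image under $\tilde f|_B$ is subanalytic by the image theorem for proper subanalytic maps. Assuming $\dim \Delta_{f_B} > 0$, Theorem~\ref{theo_1} then supplies a chain $W_0 \subset W_1 \subset \cdots \subset W_l = \Delta_{f_B}$ of subanalytic sets with strictly increasing dimensions, each stratum $W_i \setminus W_{i-1}$ smooth and subanalytic, and $f$ locally trivial over every stratum. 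Setting $W := W_{l-1}$, we get $\dim W < \dim \Delta_{f_B}$, the difference $\Delta_{f_B} \setminus W = W_l \setminus W_{l-1}$ is smooth and subanalytic in $\R^n$, and the restriction of $f$ over it is a topological locally trivial fibration by item~(v) of Theorem~\ref{theo_1}, finishing~(ii).

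The main obstacle will be establishing part~(i) via Theorem~\ref{theo_2}, which in turn rests on producing a stratification of $X$, of $B$, and of the boundary and corners of $B$, that is compatible with $\tilde f$ in a regularity sense strong enough (e.g.\ Thom's $(a_f)$ or Bekka's $(c)$-regularity) to feed Thom's first isotopy lemma, and on arranging the strata of $\partial B$ to be transverse to the fibers over $\R^n\setminus \Delta_{f_B}$. The subanalytic category guarantees such Whitney stratifications exist, and the regularity condition along strata whose image avoids $\Delta_{f_B}$ is built into the definition of the stratified discriminant; so the work is in choosing the stratification carefully and pushing through the isotopy argument. Part~(ii) is by comparison a clean formal consequence of Theorem~\ref{theo_1} once the subanalyticity of $\Delta_{f_B}$ is noted.
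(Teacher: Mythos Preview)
Your plan matches the paper's own argument: part~(i) is exactly Theorem~\ref{theo_2} specialized to $Y=\R^n$ (so $k=n$, $Y_n=\R^n$, and $\Delta_{f_B}=\Delta_n$), and part~(ii) is obtained by applying Theorem~\ref{theo_1} to $Z=\Delta_{f_B}$ and setting $W:=W_{l-1}$, which is precisely how the paper deduces it (via Theorem~\ref{theo_3} with $Y=\R^n$).

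One remark on the ``main obstacle'' you describe. You propose to run Thom's first isotopy lemma and worry about securing Thom's $(a_f)$ or Bekka's $(c)$-regularity for $\tilde f$ relative to the stratification. The paper sidesteps this: it works throughout with Verdier's $(w)$-condition on the \emph{source} stratification (automatic for subanalytic sets by Proposition~\ref{estratificacao}) and then, over $Y_i\setminus\Delta_i$, the map is by construction a submersion on every stratum, i.e.\ transversal in the sense needed for Proposition~\ref{levantamentodecampor}. Verdier's lifting of vector fields to rugose stratified fields then gives the local trivialization directly, without ever invoking $(a_f)$-regularity of the map. So the obstacle you anticipate does not actually arise in the paper's route; the only regularity input is the $(w)$-condition on the strata of $X\cap B$, not a Thom condition on $f$.
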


We observe that $(i)$ of Theorem \ref{theo_main} above generalizes Theorem 2.1 of \cite{CSG}, which concerns the case when $f: (\R^m,0) \to (\R^n,0)$ is an analytic map-germ and $B$ is a closed ball around $0$ in $\R^m$.

We give some examples of each situation approached in this paper.

%%%%%%%%%%%%%%%%%%%%%%%%%%%%%%%%%%%%%
\medskip
\section{Background}  
\medskip

Following \cite{Milman-Bierstone}, \cite{Shiota} and \cite{Verdier}, we recall some definitions and results about subanalytic sets. 

Let $M$ be a real analytic manifold. A subset $X$ of $M$ is {\it semianalytic} (in $M$) if each point $p \in M$ has a neighborhood $U_p$ in $M$ and real-valued functions $f_{jk}$ and $g_{jk}$ analytic on $U_p$ such that:
$$X \cap U_p = \bigcup_{j=1}^{\ell} \left( \bigcap_{k=1}^{s_j}\{x \in M; f_{jk}(x) = 0 \ \text{and} \ g_{jk}(x)>0\} \right) \, .$$
Clearly, every analytic set is a semianalytic set. 

A subset $X$ of $M$ is {\it subanalytic} (in $M$) if each point of $p \in M$ admits a neighborhood $U_p$ in $M$ such that $X \cap U_p$ is a projection of a relatively compact semianalytic set (i.e., there is a real analytic manifold $N$ and a relatively compact semianalytic subset $A$ of $M \times N$ such that $X \cap U_p = \pi(A)$, where $\pi:M \times N \to M$ is the projection).

Equivalently, a subset $X$ of $M$ is subanalytic if each point of $p \in M$ admits a neighborhood $U_p$ such that: 
$$X \cap U_p = \bigcup_{j=1}^n (f_{j1}(A_{j1})- f_{j2}(A_{j2})) \, ,$$
where, for each $j=1,\dots,n$ and $k = 1,2$, $A_{jk}$ is a closed analytic subset of a real analytic manifold $N_{jk}$, $f_{jk}:N_{jk} \longrightarrow U_p$ is real analytic, and $f_{{jk}_{|}}:A_{jk} \longrightarrow U_p$ is proper.

Given subsets $X \subset Y \subset M$, we say that $X$ is {\it subanalytic in $Y$} if each point of $p \in Y$ admits a neighborhood $U_p$ in $M$ such that $X \cap U_p$ is a projection of a relatively compact semianalytic set. Notice that an analytic set contained in $\R^n$ is not necessarily subanalytic in $\R^n$ because an analytic set is the zero set of an analytic function locally at each point of the set. For instance, take $X \subset \R$ given by the sequence of points $x_n = 1/n$, with $n=1, 2, \dots$, which is analytic but is not subanalytic in $\R$.

Important examples of subanalytic sets are a semianalytic set in $\R^n$ and a polyhedron imbedded and closed in $R^n$.

The intersection and union of a finite collection of subanalytic sets are subanalytic. Every connected component of a subanalytic set is subanalytic. The family of connected components is locally finite. A subanalytic set is locally connected. The closure of a subanalytic set is subanalytic. The complement (and thus the interior) of a subanalytic set is subanalytic.

Let $X \subset M$ be a subanalytic set and let $N$ be a real analytic mani\-fold. We say that $f: X \to N$ is a {\it subanalytic map} if its graph is subanalytic in $M \times N$. So the image of a relatively compact sub\-analytic set by a subanalytic mapping is subanalytic. An example of a subanalytic map is a PL map between polyhedra imbedded and closed in Euclidean spaces.

We have:

\begin{prop}\label{propriedades}(\cite{Shiota}, Prop. I.2.1.1 and I.2.1.4) 
Let $X,Y \subset \R^n$ be subanalytic sets and let $f: X \to \R^n$ be a subanalytic map.
\begin{itemize}
\item[$(i)$] $X \setminus Y$ is subanalytic.
\item[$(ii)$] If $f^{-1}(B)$ is bounded for any bounded set $B$ in $\R^n$ then $f(X)$ is subanalytic. 
\item[$(iii)$] If $f(X \cap B)$ is bounded for any bounded set $B$ in $\R^n$ then $f^{-1}(Y)$ is subanalytic.
\end{itemize}
\end{prop}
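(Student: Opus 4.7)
All three items reduce to the basic closure properties of the subanalytic class recalled in the excerpt (finite union, finite intersection, complement), together with the fact that the graph $\Gamma_f \subset \R^n \times \R^n$ of a subanalytic map $f: X \to \R^n$ is subanalytic by definition. The only additional ingredient I need is the following auxiliary observation, which I will use for both $(ii)$ and $(iii)$: if $S \subset \R^n \times \R^n$ is subanalytic and the canonical projection $\pi: \R^n \times \R^n \to \R^n$ onto one of the factors is proper when restricted to $S$, then $\pi(S)$ is subanalytic in $\R^n$.

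Item $(i)$ is the shortest. The complement $\R^n \setminus Y$ is subanalytic by the closure properties stated in the text, and $X \setminus Y = X \cap (\R^n \setminus Y)$ is therefore the intersection of two subanalytic sets, hence subanalytic. For item $(ii)$, let $\pi_2: \R^n \times \R^n \to \R^n$ denote projection onto the second factor, so that $f(X) = \pi_2(\Gamma_f)$. The hypothesis that $f^{-1}(B)$ is bounded for every bounded $B \subset \R^n$ forces $\pi_2|_{\Gamma_f}$ to be proper, since for any compact $K \subset \R^n$ one has $\pi_2^{-1}(K) \cap \Gamma_f \subset f^{-1}(K) \times K$, which is bounded. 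Applying the auxiliary observation to $S = \Gamma_f$ yields the subanalyticity of $f(X)$.

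Item $(iii)$ is symmetric. Let $\pi_1: \R^n \times \R^n \to \R^n$ be projection onto the first factor, and write $f^{-1}(Y) = \pi_1(\Gamma_f \cap (\R^n \times Y))$. The set $\Gamma_f \cap (\R^n \times Y)$ is subanalytic as an intersection of two subanalytic sets, and the hypothesis that $f(X \cap B)$ is bounded for every bounded $B \subset \R^n$ makes $\pi_1$ proper on it, because $\pi_1^{-1}(K) \cap \Gamma_f \cap (\R^n \times Y) \subset K \times f(X \cap K)$ for every compact $K \subset \R^n$. The auxiliary observation then gives the subanalyticity of $f^{-1}(Y)$.

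The only step that requires genuine care is the auxiliary observation itself, and this is where I expect the main obstacle to lie. Subanalyticity is a local condition at each point of the \emph{target}, so one cannot simply push a single global semianalytic presentation of $S$ through $\pi$. The argument I have in mind is as follows: fix $q \in \R^n$ and choose a bounded open neighborhood $U_q$; by properness, the set $\overline{\pi^{-1}(U_q) \cap S}$ is compact; cover it by finitely many coordinate patches $V_1, \dots, V_r$ in each of which $S$ admits one of the presentations described in the two equivalent formulations of subanalyticity from the excerpt (either as a projection of a relatively compact semianalytic set, or as a finite union of differences $f_{j1}(A_{j1}) - f_{j2}(A_{j2})$ with $f_{jk}|_{A_{jk}}$ proper); then the disjoint union of these local presentations, followed by $\pi$, exhibits $\pi(S) \cap U_q$ in the same form, which is precisely the local presentation required for subanalyticity in $\R^n$. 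The remainder of the argument is routine bookkeeping to keep track of relative compactness and properness through this gluing.
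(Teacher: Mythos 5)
The paper does not prove this proposition at all: it is quoted verbatim from Shiota (Prop.\ I.2.1.1 and I.2.1.4) and used as a black box, so there is no in-paper argument to compare against. Your self-contained proof is essentially the standard one and is correct in substance: $(i)$ follows from the closure of the subanalytic class under complement and finite intersection; $(ii)$ and $(iii)$ both reduce, via the graph $\Gamma_f$ (subanalytic by the definition of a subanalytic map) and the identities $f(X)=\pi_2(\Gamma_f)$ and $f^{-1}(Y)=\pi_1\bigl(\Gamma_f\cap(\R^n\times Y)\bigr)$, to the fact that a projection with controlled preimages sends subanalytic sets to subanalytic sets, which in turn comes straight from the local ``projection of a relatively compact semianalytic set'' definition. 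One small point of hygiene: the hypothesis you actually verify in both applications is that $\pi^{-1}(K)\cap S$ is \emph{bounded} for $K$ compact, not that $\pi|_S$ is \emph{proper} --- these differ when $S$ is not closed (e.g.\ $X$ not closed makes $\Gamma_f$ not closed), and properness genuinely fails there. Fortunately your proof of the auxiliary observation only ever uses relative compactness of $\pi^{-1}(U_q)\cap S$, so the argument goes through once you restate the lemma with the boundedness hypothesis; as stated, the lemma's hypothesis is one you have not actually established. In $(iii)$ you also implicitly use that $\R^n\times Y$ is subanalytic in $\R^n\times\R^n$; products of subanalytic sets are subanalytic, but this is not among the closure properties listed in the paper's background section, so it deserves a one-line justification from the definition.
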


The following proposition gives a subanalytic version of the curve selection lemma:

\begin{prop} \label{lema_selecao} (\cite{Shiota}, Prop. I.2.1.7) 
Let $X \subset M$ be a subanalytic set. For each point $x \in \overline{X} \setminus X$, there exists a subanalytic map $\phi: [0,1] \longrightarrow \overline{X}$ such that $\phi \big( (0, 1] \big)  \subset X$, $\phi(0) = x$ and $\phi_{| (0,1]}$ is an analytic embedding.
\end{prop}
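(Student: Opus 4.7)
The plan is to reduce the subanalytic case to the classical semianalytic curve selection lemma of \L ojasiewicz, exploiting the very definition of subanalyticity as a projection of a relatively compact semianalytic set, and then to repair the projected curve into a genuine analytic embedding.

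Concretely, I would first localize: pick a neighborhood $U_p$ of $x$ in $M$, a real analytic manifold $N$, and a relatively compact semianalytic set $A \subset M \times N$ with $\pi(A) = X \cap U_p$, where $\pi: M\times N \to M$ is the projection. Since $x \in \overline{X} \cap U_p$, choose $x_n \in X \cap U_p$ with $x_n \to x$ and lift to $a_n \in A$ with $\pi(a_n) = x_n$. Relative compactness of $A$ in $M\times N$ forces, after passing to a subsequence, $a_n \to a \in \overline{A}$; by continuity $\pi(a) = x$, and $a \notin A$ because otherwise $x = \pi(a) \in \pi(A) \subset X$. Hence $a \in \overline{A} \setminus A$, and I may apply the semianalytic version of the curve selection lemma (which I take as known) to $A$ at $a$, producing a subanalytic $\psi: [0,1] \to \overline{A}$ with $\psi(0) = a$, $\psi((0,1]) \subset A$, and $\psi|_{(0,1]}$ a real analytic embedding.

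Set $\phi := \pi \circ \psi: [0,1] \to M$. Then $\phi$ is subanalytic as a composition of subanalytic maps (its graph is the projection of the graph of $\psi$ composed with $\mathrm{id}\times\pi$), $\phi(0) = x$, and $\phi((0,1]) \subset \pi(A) = X \cap U_p \subset X$; since $\psi([0,1]) \subset \overline{A}$ is compact, $\phi([0,1]) \subset \pi(\overline{A}) \subset \overline{X}$. The main obstacle, and where some care is needed, is that $\pi$ may collapse tangent directions of $\psi$, so a priori $\phi$ need not be an embedding on $(0,1]$.

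To resolve this, I would first argue that $\phi|_{(0,1]}$ cannot be constant on any subinterval: a real analytic map constant on an open set is globally constant, yet $\phi(t) \to x \notin X$ while $\phi(t) \in X$ for all $t>0$. Hence $\phi'$ is a nonzero real analytic vector-valued function on $(0,1]$, so its zero set is discrete; after restriction to a small $(0,\delta]$, $\phi$ is a real analytic immersion. Its image $\phi((0,\delta])$ is then a connected one-dimensional subanalytic subset of $X$ accumulating at $x$, and by \L ojasiewicz's structure theorem applied in a small neighborhood of $x$ this image is a finite union of disjoint analytic arcs meeting only at $x$; selecting one such arc and reparametrizing by arclength (in any subanalytic Riemannian metric on $M$, or equivalently by a subanalytic homeomorphism $[0,1]\to \overline{\text{arc}}$ given by the structure theorem) yields a subanalytic map $\phi: [0,1] \to \overline{X}$ with $\phi(0)=x$, $\phi((0,1]) \subset X$ and $\phi|_{(0,1]}$ an analytic embedding. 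The subtle point to watch is that the reparametrization must preserve subanalyticity on the closed interval while keeping analyticity and injectivity on the open part; this is precisely what the one-dimensional subanalytic structure theorem affords.
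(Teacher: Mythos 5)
The paper offers no proof of this proposition: it is quoted directly from Shiota (Prop.\ I.2.1.7), so there is no in-paper argument to compare yours against, and I can only assess your proposal on its own terms. Your overall strategy --- lift $x$ to a point $a\in\overline{A}\setminus A$ for a relatively compact semianalytic $A$ with $\pi(A)=X\cap U_p$, apply the semianalytic curve selection lemma upstairs, push the curve down by $\pi$, and then repair the projected curve using the structure of one-dimensional subanalytic sets --- is sound, and the lifting step (compactness of $\overline{A}$, $a\notin A$ because $x\notin\pi(A)$, subanalyticity of $\pi\circ\psi$ via the image of its graph) is correct.

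Two points need attention. First, the step ``$\phi'$ is a nonzero real analytic vector-valued function on $(0,1]$, so its zero set is discrete; after restriction to a small $(0,\delta]$, $\phi$ is a real analytic immersion'' has a genuine gap: discreteness of the zero set in the open interval does not prevent the zeros from accumulating at the endpoint $0$, and analyticity on $(0,1]$ alone cannot rule this out (the function $t\mapsto t+t^{2}\sin(1/t)$ is analytic and nonconstant on $(0,1]$, extends continuously to $0$, yet has critical points at every $t=1/(2\pi n)$). To close this you must invoke subanalyticity of $\phi$ on the \emph{closed} interval, so that the zero set of $\phi'$ is a relatively compact zero-dimensional subanalytic subset of $\R$ and hence finite; alternatively, and more cleanly, the immersion step can be dropped entirely, since your final paragraph only uses that $\phi([0,1])$ is a compact one-dimensional subanalytic set contained in $X\cup\{x\}$ whose closure contains $x$, after which the half-branch decomposition produces the arc directly. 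Second, the ``structure theorem'' you invoke (a one-dimensional subanalytic germ is a finite union of analytic half-branches, each with a Puiseux-type parametrization that is analytic and injective up to the endpoint) rests on the nontrivial fact that subanalytic sets of dimension at most one are semianalytic; it is legitimate to quote it, but it carries essentially all of the weight of the ``analytic embedding'' conclusion and of the subanalyticity of the reparametrized $\phi$, so it should be cited precisely rather than attributed loosely to {\L}ojasiewicz. With these repairs the argument is complete.
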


Any subanalytic set $X \subset \R^n$ has a locally conical structure. Precisely, we have:

\begin{prop} \label{estrutura_conica} (\cite{Coste} Thm 4.10) 
Any subanalytic set $X \subset \R^n$ has a locally conical structure, i.e., for any $x \in X$ there exists a sufficiently small ball $\B_\e(x)$ around $x$ in $\R^n$ such that $X \cap \B_\e(x)$ is homeomorphic to the cone over $X \cap \BS_\e(x)$, where $\BS_\e(x)$ is the boundary of $\B_\e(x)$.
\end{prop}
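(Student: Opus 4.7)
The plan is to apply the stratified Thom–Mather first isotopy lemma to the squared distance function from $x$, after showing that this function has no critical points on any stratum of a Whitney stratification of $X$ in a small punctured neighborhood of $x$. First I would fix a locally finite Whitney subanalytic stratification $\mathcal{S}=\{S_\alpha\}$ of $\overline{X}$ compatible with $X$ (so that $X$ is a union of strata); such stratifications exist by Verdier's theorem in the subanalytic category. Only finitely many strata have $x$ in their closure, and I may restrict attention to these.

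Next, let $\rho(y)=\|y-x\|^2$ and, for each stratum $S_\alpha$, let $C_\alpha\subset S_\alpha$ denote the critical set of $\rho|_{S_\alpha}$, i.e.\ the points $y\in S_\alpha$ with $y-x\perp T_y S_\alpha$. Each $C_\alpha$ is subanalytic. I claim $x$ is isolated in $\{x\}\cup\bigcup_\alpha C_\alpha$. Suppose not: then for some $\alpha$ one has $x\in\overline{C_\alpha\setminus\{x\}}$, and the subanalytic curve selection lemma (Proposition \ref{lema_selecao}) yields a subanalytic path $\phi:[0,1]\to\overline{C_\alpha}$ with $\phi(0)=x$ and $\phi((0,1])\subset C_\alpha$. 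For $t\in(0,1]$ one has $\phi(t)\in S_\alpha$, $\phi'(t)\in T_{\phi(t)}S_\alpha$, and the criticality condition forces $\phi'(t)\perp(\phi(t)-x)$, whence
\[
\tfrac{d}{dt}\rho(\phi(t)) = 2\langle \phi'(t),\,\phi(t)-x\rangle \equiv 0 \quad\text{on } (0,1].
\]
By continuity $\rho\circ\phi$ is constant on $[0,1]$, and since $\rho(\phi(0))=0$ this forces $\phi\equiv x$, contradicting $\phi((0,1])\subset S_\alpha\setminus\{x\}$. Hence there exists $\e_0>0$ such that for every $0<t\leq\e_0$, the sphere $\BS_t(x)$ meets every stratum $S_\alpha$ transversally.

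Third, fix $0<\e<\e_0$. The map $\rho:\overline{X}\cap(\overline{\B_\e(x)}\setminus\{x\})\to(0,\e^2]$ is proper (its source is a closed bounded subanalytic set, hence compact) and, by the transversality just established, restricts to a submersion on each stratum. The stratified Thom–Mather first isotopy lemma then produces a stratum-preserving homeomorphism
\[
\Phi:(\overline{X}\cap\BS_\e(x))\times(0,1]\longrightarrow \overline{X}\cap(\overline{\B_\e(x)}\setminus\{x\}),
\]
obtained by flowing along a controlled lift of $\partial/\partial t$. Extending $\Phi$ by $(y,0)\mapsto x$ yields a homeomorphism from the cone over $\overline{X}\cap\BS_\e(x)$ onto $\overline{X}\cap\overline{\B_\e(x)}$; because $\Phi$ is stratum-preserving, it restricts to a homeomorphism from the cone over $X\cap\BS_\e(x)$ onto $X\cap\B_\e(x)$.

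The main obstacle is the transversality step: while the computation $\frac{d}{dt}\rho\circ\phi=0$ is simple once the curve is in hand, the use of Proposition \ref{lema_selecao} requires knowing that $C_\alpha$ is subanalytic, which in turn needs subanalyticity of the Gauss map of the analytic stratum $S_\alpha$. A secondary technical issue is that the Thom–Mather lemma needs a genuine Whitney (not merely subanalytic) stratification in order to guarantee controlled vector fields; this is supplied by Verdier's theorem, but it is the non-trivial analytic input of the argument.
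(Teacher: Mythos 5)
The paper offers no proof of this proposition: it is quoted verbatim from Coste (Thm.\ 4.10), so your argument can only be measured against the standard proofs in the literature. Your proof is essentially correct, and it takes a genuinely different route from Coste's. Coste works in an arbitrary o-minimal structure, where vector fields cannot in general be integrated, and therefore deduces the conic structure from a Hardt-type definable trivialization of the distance function, the delicate point being continuity of the trivialization at the vertex. You instead use the classical stratification argument: curve selection applied to the critical locus of $\rho(y)=\|y-x\|^2$ on each stratum shows that small spheres $\BS_t(x)$ are transverse to all strata, and the Thom--Mather (or Verdier) isotopy machinery then trivializes $\rho$ over $(0,\e^2]$. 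This route is available precisely because subanalytic sets admit strong Whitney stratifications with rugose, hence integrable, lifted vector fields --- i.e.\ exactly the toolbox (Propositions \ref{estratificacao}, \ref{lema_selecao}, \ref{levantamentodecampor}) that the paper itself uses in Section 3 --- so your proof is arguably better adapted to the present setting, at the price of being restricted to the subanalytic category. Two small repairs: your parenthetical justification of properness is wrong as stated, since $\overline{X}\cap\bigl(\overline{\B_\e(x)}\setminus\{x\}\bigr)$ is not compact; properness of $\rho$ onto $(0,\e^2]$ nevertheless holds because the preimage of a compact subset of $(0,\e^2]$ avoids a neighborhood of $x$ and is thus a closed subset of the compact set $\overline{X}\cap\overline{\B_\e(x)}$. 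Likewise, the extension of $\Phi$ over the vertex should be finished by observing that the extended map is a continuous bijection from a compact space (the cone) to a Hausdorff space, hence a homeomorphism; continuity at the vertex follows since $\rho\circ\Phi(y,t)\to 0$ forces $\Phi(y,t)\to x$.
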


We say that a Whitney stratification that satisfies the $(w)$-condition of Kuo (see 1.3 of \cite{Verdier}, for instance) is a {\it strong Whitney stratification}. 

Recall that a real analytic set $Y$ is said {\it countable at infinity} if there exists a sequence $\{K_n\}$ of compact subsets of $Y$ such that $K_n \subset K_{n+1}$ and $Y= \cup_n K_n$.

We have:

\begin{prop} \label{estratificacao} (\cite{Verdier} Thm 2.2) 
Let $Y \subset \R^n$ be a real analytic set countable at infinity. Let $X_\beta$ be a locally finite family of subsets of $Y$ subanalytic in $Y$. Then there exists a strong Whitney stratification $\{ \mathcal{S}_\a \}$ of $Y$ such that each $X_\beta$ is a union of strata. 
\end{prop}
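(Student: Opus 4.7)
The plan is to build a strong Whitney stratification of $Z$ by dimension and a compatible strong Whitney stratification of the source, then invoke the Thom--Mather first isotopy lemma stratum by stratum. Since $B$ is compact and $\tilde{f}$ is continuous, $f(X\cap B)$ is bounded, hence so is $Z$. First I would apply Proposition \ref{estratificacao} to the analytic set $\R^n$ with the subanalytic family $\{Z,\,f(X\cap B)\}$, obtaining a strong Whitney stratification $\mathcal{T}$ of $\R^n$ in which both $Z$ and $f(X\cap B)$ are unions of strata. Because $Z$ is bounded, only finitely many strata of $\mathcal{T}$ lie in $Z$; let $d_1<d_2<\cdots<d_l$ be the distinct dimensions that occur, so $1\le l\le\dim Z$. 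Set $W_0:=\emptyset$ and, for $i=1,\dots,l$,
\[
W_i:=\bigcup\{T\in\mathcal{T}:T\subset Z,\ \dim T\le d_i\}.
\]
Then $W_l=Z$; each $W_i$ is a finite union of subanalytic strata, hence subanalytic in $\R^n$; $\dim W_{i-1}<\dim W_i$; and $W_i\setminus W_{i-1}$ is the disjoint union of the finitely many strata of $\mathcal{T}$ of dimension exactly $d_i$ inside $Z$, which is a smooth subanalytic submanifold of $\R^n$. This yields (i)--(iv).

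For (v), I need a source-side refinement. The graph $\Gamma_{\tilde{f}}\subset\R^m\times\R^n$ is subanalytic, so I would apply Proposition \ref{estratificacao} on $\R^m\times\R^n$ to the family consisting of $\Gamma_{\tilde{f}}$, $X\times\R^n$, $B\times\R^n$, and the finitely many $\R^m\times T$ with $T\in\mathcal{T}$ meeting $f(X\cap B)$; projecting the resulting stratification along $\R^m\times\R^n\to\R^m$ and refining once more via Proposition \ref{estratificacao}, one obtains a strong Whitney stratification $\mathcal{S}$ of $\R^m$ compatible with $X$, $B$, $X\cap B$, and with each $\tilde{f}^{-1}(T)$ for $T\in\mathcal{T}$, such that $\tilde{f}$ sends each stratum $S\in\mathcal{S}$ contained in $X\cap B$ analytically into a single stratum of $\mathcal{T}$. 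A rank-stratification argument, applied to the subanalytic set $\{x\in S: \mathrm{rank}_x(\tilde{f}|_S)<k\}$ and followed by a refinement of $\mathcal{T}$, further ensures that $\tilde{f}|_S$ is an analytic submersion onto its image stratum. Compactness of $B$ forces only finitely many strata of $\mathcal{S}$ to meet $B$, and for any stratum $T\in\mathcal{T}$ and compact $K\subset T$, $f^{-1}(K)\cap B$ is closed in the compact set $\tilde{f}^{-1}(K)\cap B$, hence compact; so $f_|:f^{-1}(T)\cap B\to T$ is proper.

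Finally, fix $i\in\{1,\dots,l\}$ and a stratum $T$ of $\mathcal{T}$ with $T\subset W_i\setminus W_{i-1}$. The restriction $f_|:f^{-1}(T)\cap B\to T$ is a proper map of strong Whitney-stratified sets which is a submersion on every source stratum, so the Thom--Mather first isotopy lemma (whose hypotheses are fulfilled by the $(w)$-condition guaranteed by Proposition \ref{estratificacao}) yields a topological locally trivial fibration over $T$. Since $W_i\setminus W_{i-1}$ is the disjoint union of such strata, the restriction $f_|:f^{-1}(W_i\setminus W_{i-1})\cap B\to W_i\setminus W_{i-1}$ is locally trivial as well, proving (v). The main obstacle I foresee is the joint-refinement step: upgrading the raw stratifications from Proposition \ref{estratificacao} to a stratified analytic submersion in the subanalytic category is not addressed directly by the lemmas of Section~2. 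I would handle it either by citing Shiota's stratification theorem for subanalytic maps or by iterating Proposition \ref{estratificacao} on $\Gamma_{\tilde{f}}$ and its rank loci, using the subanalytic curve selection lemma (Proposition \ref{lema_selecao}) to verify that the $(w)$-condition survives each successive refinement.
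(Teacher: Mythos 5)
Your proposal does not address the statement it is supposed to prove. The statement in question is Proposition \ref{estratificacao}, i.e.\ Verdier's stratification theorem: given a real analytic set $Y \subset \R^n$ countable at infinity and a locally finite family of subsets subanalytic in $Y$, there is a strong Whitney (i.e.\ $(w)$-regular) stratification of $Y$ compatible with the family. What you have written is instead a proof sketch of Theorem \ref{theo_1} of the paper (the filtration $W_0 \subset \cdots \subset W_l = Z$ together with the locally trivial fibrations over the successive differences). Worse, your very first step is ``apply Proposition \ref{estratificacao} to the analytic set $\R^n$ with the subanalytic family $\{Z, f(X\cap B)\}$,'' and you invoke it again twice more in the source-side refinement. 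As an argument for Proposition \ref{estratificacao} itself this is circular: you are assuming exactly the result you are asked to establish.

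For the record, the paper does not prove this proposition either; it is quoted verbatim from Verdier (Th\'eor\`eme 2.2 of \emph{Stratification de Whitney et Th\'eor\`eme de Bertini-Sard}). A genuine proof runs along entirely different lines from anything in your sketch: one stratifies by downward induction on dimension, at each stage removing from the current smooth locus the set of points where some incidence pair fails the $(w)$-condition, and the substance of the argument is Verdier's theorem that this bad set is itself subanalytic of strictly smaller dimension (his Th\'eor\`eme 1.4 / Proposition 2.1), so the induction terminates and produces a locally finite stratification compatible with the given family. None of the tools you list --- the Thom--Mather isotopy lemma, properness of $f_B$, rank stratifications of $\tilde f$ --- enter into that argument. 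If the intent was to prove Theorem \ref{theo_1}, your outline is broadly consonant with the paper's route (which proves Proposition \ref{prop_2} using Propositions \ref{estratificacao}, \ref{teosubmersaodosestratos} and \ref{levantamentodecampor}, then iterates), but it should be presented as such and not as a proof of the background proposition.
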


In particular, the proposition above gives a strong Whitney stratification for any subanalytic set $X \subset \R^n$.

\medskip
Given a subanalytic set $X \subset \R^n$ endowed with a strong Whitney stratification $\{ \mathcal{S}_\a \}$, we say that a stratified vector field $v$ on $X$ (i.e., a vector field on $X$ such that $v(x) \in T_x \mathcal{S}_{\a(x)}$ for each $x \in X$, where $\mathcal{S}_{\a(x)}$ denotes the stratum that contains $x$) is {\it rugose} if for any $p \in X$ there exists a neighborhood $W_p$ of $p$ in $\mathbb{R}^n$ and a constant $C_p > 0$, such that
$$||v(y) - v(x)|| \leq C_p ||y - x||,$$
for every $y \in W_p \cap S_{\alpha(p)}$ and every $x \in W_p \cap S_\beta$, whenever $\overline{S_\beta} \supset S_{\a(p)}$, where $S_{\a(p)}$ denotes the stratum that contains the point $p$. 

In Proposition 4.8 of \cite{Verdier}, Verdier proved that any rugose stratified vector field $v$ on a closed subanalytic set $X \subset \mathbb{R}^n$ is integrable. 

\medskip
We say that a continuous map $f: X \to W$ between subanalytic sets is {\it transversal} to a strong Whitney stratification $\mathcal{S} = \{\mathcal{S}_\a\}$ of $X$ if there exists a strong Whitney stratification $\mathcal{W} = \{\mathcal{W}_\b\}$ of $W$ such that for each $\a$ one has that $f(\mathcal{S}_\a) \subset \mathcal{W}_\b$ for some $\b$ and the restriction $f_{| \mathcal{S}_\a}: \mathcal{S}_\a \to \mathcal{W}_\b$ is a $\mathcal{C}^\infty$-submersion.

We have:

\begin{prop}(\cite{Verdier}, Prop. 4.6)\label{levantamentodecampor}
Let $Y \subset \R^n$ be a real analytic space and let $X$ be a locally closed subset of $Y$ endowed with a strong Whitney stratification $\mathcal{S} = \{\mathcal{S}_\a\}$. Let $Z$ be a smooth real analytic space and let $f: Y \longrightarrow Z$ be a continuous map transversal to $\mathcal{S}$. Let $\vec v$ be a $\mathcal{C}^\infty$ vector field on $Z$. Then there exists a rugose stratified vector field $\vec w$ on $X$ that lifts $\vec v$, i.e., for each $x \in X$ one has that $df(\vec w(x)) = \vec v(f(x))$.
\end{prop}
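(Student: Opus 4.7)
The plan is to construct the lift $\vec w$ locally via the submersion hypothesis and then glue the local pieces together by a partition of unity, using the $(w)$-condition of Kuo--Verdier to guarantee rugosity across strata. On any single stratum $\mathcal{S}_\a$, the assumption that $f|_{\mathcal{S}_\a} : \mathcal{S}_\a \to Z$ is a $\mathcal{C}^\infty$-submersion means that around each point we can trivialize and choose a smooth horizontal lift of $\vec v$ via the implicit function theorem; if there were only one stratum we would be done.

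The crux of the argument is the inductive extension across strata. Fix $p$ in a stratum $\mathcal{S}_\a$ and a higher-dimensional stratum $\mathcal{S}_\b$ with $p \in \overline{\mathcal{S}_\b}$, and suppose a smooth local lift $\vec w_\a$ of $\vec v$ on $\mathcal{S}_\a$ near $p$ has already been chosen. Extend $\vec w_\a$ arbitrarily to a smooth vector field $\tilde{\vec w}$ on a neighborhood $U_p$ of $p$ in $\R^n$. For $y \in \mathcal{S}_\b \cap U_p$ define
$$\vec w_\b(y) := \pi_y\bigl(\tilde{\vec w}(y)\bigr) + \vec k(y),$$
where $\pi_y$ is orthogonal projection onto $T_y \mathcal{S}_\b$ and $\vec k(y) \in \ker df|_{\mathcal{S}_\b}(y)$ is the unique correction making $df(\vec w_\b(y)) = \vec v(f(y))$. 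The $(w)$-condition gives a Grassmannian bound $\text{dist}(T_y \mathcal{S}_\b, T_p \mathcal{S}_\a) = O(\|y-p\|)$, hence $\tilde{\vec w}(y) - \pi_y(\tilde{\vec w}(y)) = O(\|y - p\|)$; similarly $df\,\pi_y(\tilde{\vec w}(y)) - \vec v(f(y)) = O(\|y - p\|)$ since $\tilde{\vec w}$ already lifts $\vec v$ along $\mathcal{S}_\a$, so $\vec k(y) = O(\|y - p\|)$ as well. This is exactly the Lipschitz estimate required by the definition of rugose.

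To globalize, cover $X$ by locally finitely many such neighborhoods (working upwards from the lowest-dimensional strata) and apply a smooth partition of unity. Since $df$ is linear, convex combinations of lifts are lifts; and since rugosity is preserved under convex combinations with $\mathcal{C}^\infty$ coefficients whose gradients are locally bounded, the patched vector field $\vec w$ is a rugose stratified lift of $\vec v$ on all of $X$.

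The main obstacle is the construction of $\vec w_\b$ described above: one must simultaneously preserve the tangency condition $\vec w_\b(y) \in T_y \mathcal{S}_\b$, the lifting identity $df|_{\mathcal{S}_\b}(\vec w_\b(y)) = \vec v(f(y))$, and a bound linear in $\|y-p\|$. This is precisely where the strong Whitney hypothesis is essential: Kuo's $(w)$-condition (as opposed to merely Whitney's $(b)$-regularity) provides the quantitative $O(\|y-p\|)$ control on the gap between tangent planes, and this bound is what propagates cleanly through both the orthogonal projection $\pi_y$ and the kernel correction $\vec k(y)$.
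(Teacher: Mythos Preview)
The paper does not supply its own proof of this proposition: it is quoted verbatim as Proposition~4.6 of Verdier and used as a black box. So there is nothing in the paper to compare your argument against; your sketch is essentially a reconstruction of Verdier's original proof, and the overall architecture (local smooth lift on each stratum, inductive extension across strata controlled by the $(w)$-condition, gluing by partition of unity) is the right one.

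That said, there is a genuine slip in your inductive step. You write that the correction term $\vec k(y)$ lies in $\ker df|_{\mathcal{S}_\b}(y)$ and is ``the unique correction making $df(\vec w_\b(y)) = \vec v(f(y))$''. These two requirements are incompatible: if $\vec k(y)$ is in the kernel then $df(\vec k(y)) = 0$, so adding it cannot change the image and cannot correct anything. What you need is the opposite: take $\vec k(y)$ in the orthogonal complement of $\ker df|_{\mathcal{S}_\b}(y)$ inside $T_y\mathcal{S}_\b$, where $df$ restricts to an isomorphism onto $T_{f(y)}Z$ (this is where the submersion hypothesis on $\mathcal{S}_\b$ is used). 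Then $\vec k(y)$ is indeed uniquely determined by the error $\vec v(f(y)) - df\bigl(\pi_y(\tilde{\vec w}(y))\bigr)$, varies smoothly in $y$, and your $O(\|y-p\|)$ estimate goes through because the inverse of $df$ on that complement is locally bounded. With this fix the rest of your outline is sound.
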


\begin{prop}(\cite{Verdier}, Thm. 3.3)\label{teosubmersaodosestratos}
Let $Y$ and $Z$ be real analytic sets countable at infinity and let $f: Y \to Z$ be a continuous map. Let $X$ be a locally closed subset of $Y$ endowed with a strong Whitney stratification $\mathcal{S} = \{\mathcal{S}_\a\}$. Suppose that the restriction $f_{|X}: X \longrightarrow Z$ is proper. Then there exists an open set $U$ in $f(A)$, dense in $f(A)$, which is a smooth subspace of $Z$ and subanalytic in $Z$, such that the restriction $f_{|U}: f^{-1}(U)\longrightarrow U$ is transversal to $\mathcal{S} \cap f^{-1}(U)$.
\end{prop}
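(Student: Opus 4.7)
My strategy is a Sard-type argument: for each stratum $\mathcal{S}_\alpha$, the locus where the smooth map $f|_{\mathcal{S}_\alpha}$ fails to be a submersion onto $Z$ produces a thin subanalytic ``bad set'' in $Z$, and the complement of the union of these bad sets, intersected with the smooth part of $f(X)$, furnishes the desired $U$.

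First, I refine $\mathcal{S}$ and simultaneously stratify $Z$ compatibly with $f(X)$ using Proposition \ref{estratificacao}, so that each restriction $f|_{\mathcal{S}_\alpha}$ is analytic and sends $\mathcal{S}_\alpha$ into a single stratum of $Z$. Since $f|_X$ is proper, $f(X)$ is a closed subanalytic subset of $Z$ by Proposition \ref{propriedades}(ii). I let $V \subset f(X)$ denote the union of the top-dimensional strata of $f(X)$ in this refined stratification; then $V$ is open and dense in $f(X)$, smooth, and subanalytic in $Z$.

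Next, for each stratum $\mathcal{S}_\alpha$ with $\dim \mathcal{S}_\alpha \geq \dim V$, the critical locus of the analytic map $f|_{\mathcal{S}_\alpha}: \mathcal{S}_\alpha \to Z$ is a subanalytic subset of $\mathcal{S}_\alpha$, and the subanalytic Sard theorem yields a subanalytic critical-value set $C_\alpha \subset Z$ with $\dim C_\alpha < \dim V$; when $\dim \mathcal{S}_\alpha < \dim V$ I simply set $C_\alpha := f(\mathcal{S}_\alpha)$, which also has dimension less than $\dim V$. The properness of $f|_X$ is essential here: every compact $K \subset Z$ has compact preimage in $X$, hence meets only finitely many strata and therefore only finitely many of the $C_\alpha$, so the family $\{C_\alpha\}$ is locally finite in $Z$ and the union $C := \bigcup_\alpha C_\alpha$ is subanalytic in $Z$ with $\dim C < \dim V$.

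Finally, I set $U := V \setminus \overline{C}$. By Proposition \ref{propriedades}(i) the set $U$ is subanalytic, and it is open and dense in $f(X)$ because $V$ is open and dense in $f(X)$ while $\overline{C}$ is closed and of dimension strictly less than $\dim V$; smoothness is inherited from $V$. By the construction of $C$, for every stratum $\mathcal{S}_\alpha$ the restriction $f|_{\mathcal{S}_\alpha \cap f^{-1}(U)}: \mathcal{S}_\alpha \cap f^{-1}(U) \to U$ is a $C^\infty$-submersion, so the trivial Whitney stratification $\mathcal{W} = \{U\}$ of $U$ witnesses transversality of $f|_{f^{-1}(U)}$ to $\mathcal{S} \cap f^{-1}(U)$. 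The main obstacle I foresee is the passage from the stratum-wise Sard conclusion to a global subanalytic bad set: countable unions of subanalytic sets need not be subanalytic, so one must genuinely exploit properness of $f|_X$ to reduce $C$ to a locally finite union, and then verify that taking closure and complement preserves subanalyticity, openness, and density simultaneously inside the (possibly singular) ambient real analytic set $Z$.
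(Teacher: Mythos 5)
The paper does not prove this proposition; it is imported verbatim as Theorem 3.3 of Verdier's \emph{Stratification de Whitney et th\'eor\`eme de Bertini-Sard}, so there is no internal proof to compare against. Your Bertini--Sard strategy is in substance the standard (and Verdier's own) route: remove the critical values of $f$ restricted to each stratum, use properness to keep the bad set locally finite and subanalytic, and use subanalyticity plus measure zero to bound its dimension. Two of your supporting remarks are exactly right and worth keeping: the local finiteness of $\{C_\alpha\}$ really does hinge on properness of $f|_X$, and the hypothesis that $f$ is merely continuous must be upgraded (as it implicitly is in all the paper's applications, where $f$ extends to an analytic map) before ``critical locus of $f|_{\mathcal{S}_\alpha}$'' makes sense.

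The one genuine gap is your choice of $V$. The union of the \emph{globally} top-dimensional strata of $f(X)$ need not be dense in $f(X)$: if $f(X)$ has pieces of different local dimensions (a surface together with a curve not contained in its closure, say), the top-dimensional strata miss the curve entirely, and then no subset of $V$ can be dense in $f(X)$. You must take $V$ to be the union of the strata that are \emph{open in} $f(X)$ (equivalently, of locally maximal dimension); the frontier condition of the Whitney stratification makes this union open, dense and smooth, though of non-constant dimension. This in turn breaks your blanket comparison with the single number $\dim V$: the Sard argument has to be run separately over each open stratum $T$ of $f(X)$, removing the critical values of $f|_{\mathcal{S}_\alpha}\colon \mathcal{S}_\alpha \to T$ for those $\mathcal{S}_\alpha$ with $f(\mathcal{S}_\alpha)\subset T$ and $\dim \mathcal{S}_\alpha \ge \dim T$, and removing $f(\mathcal{S}_\alpha)$ outright when $\dim \mathcal{S}_\alpha < \dim T$, so that the bad set inside each $T$ has dimension $<\dim T$. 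With that repair, and with the target stratification of $U$ taken to be $\{U\cap T\}_T$ rather than the single stratum $\{U\}$ (which cannot be a manifold if the dimensions vary), the rest of your argument — closure, complement, density, and the verification of transversality — goes through.
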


%%%%%%%%%%%%%%%%%%%%%%%%%%%%%%%%%%%%%
\medskip
\section{Milnor-L\^e type fibrations for subanalytic maps}  
\medskip

Let $X \subset \R^m$ and $Y \subset \R^n$ be subanalytic sets, with $m \geq n$. Let: 
$$f :X \to Y$$
be a subanalytic map that extends to a continuous map $\tilde{f}: \R^m \to \R^n$.

We have:

%%%
\begin{prop} \label{prop_1}
Let $B \subset \R^m$ be a compact subanalytic set and let $\CC \subset \R^n$ be a one-dimensional subanalytic set contained in $f(X \cap B)$. For any $y \in \CC$ there exists a positive real number $\delta>0$ such that the restriction:
$$f_|: f^{-1} \big( (\CC \backslash \{y\}) \cap \BD_\delta(y) \big) \cap B \to (\CC \backslash \{y\}) \cap \BD_\delta(y)$$
is the projection of a topological trivial fibration, where $\BD_\delta(y)$ denotes the closed ball of radius $\delta$ around $y$ in $\R^n$.
\end{prop}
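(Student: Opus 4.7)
My plan is to reduce the proposition to the triviality of $f$ over each local subanalytic branch of $\CC$ at $y$ by appealing to the locally conical structure, and then to apply Verdier's rugose-vector-field technology to produce a trivializing flow on each branch. By Proposition~\ref{estrutura_conica}, I choose $\delta > 0$ small enough that $\CC \cap \BD_\delta(y)$ is homeomorphic to the cone over the finite set $\CC \cap \BS_\delta(y) = \{y_1, \dots, y_k\}$; then $(\CC \setminus \{y\}) \cap \BD_\delta(y)$ decomposes as a disjoint union of open subanalytic arcs $\CC_1, \dots, \CC_k$, each homeomorphic to a half-open interval. Since these arcs are pairwise disjoint and open, it suffices to exhibit a trivialization of $f_|: f^{-1}(\CC_i) \cap B \to \CC_i$ for every $i$.

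Set $X := f^{-1}(\CC \cap \BD_\delta(y)) \cap B$, which is compact and subanalytic by Proposition~\ref{propriedades}(iii) and the compactness of $B$. Endow $X$ with a strong Whitney stratification $\mathcal{S}$ via Proposition~\ref{estratificacao}, compatible with $f^{-1}(y) \cap B$ and each $f^{-1}(\CC_i) \cap B$. Because $\tilde f|_X$ is proper, Proposition~\ref{teosubmersaodosestratos} yields an open dense subanalytic subset $U \subset \tilde f(X)$ over which $\tilde f|_{f^{-1}(U)}$ is transversal to $\mathcal{S} \cap f^{-1}(U)$; since $\dim \CC = 1$, the complement $\CC \setminus U$ is discrete, so shrinking $\delta$ further I may arrange $(\CC \setminus \{y\}) \cap \BD_\delta(y) \subset U$. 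Consequently each $\CC_i$ becomes a stratum over which $\tilde f$ restricts to a stratified submersion.

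Fixing a branch $\CC_i$, I use Proposition~\ref{lema_selecao} to choose a subanalytic analytic embedding $\phi_i : (0,1] \to \CC_i$ with $\phi_i(t) \to y$ as $t \to 0^+$. The non-vanishing tangent field $\phi_i'$ extends smoothly from $\CC_i$ to a $\mathcal{C}^\infty$ vector field $\vec v$ on $\R^n$ using a tubular neighbourhood and a partition of unity. Proposition~\ref{levantamentodecampor} then supplies a rugose stratified vector field $\vec w$ on $f^{-1}(\CC_i) \cap B$ lifting $\vec v$ under $\tilde f$. For each $t_0 \in (0, 1)$ the piece $X_{t_0} := f^{-1}(\phi_i([t_0, 1])) \cap B$ is closed subanalytic and compact, so Verdier's integrability theorem (Proposition~4.8 of \cite{Verdier}) guarantees that the flow of $\vec w$ is defined and continuous in its variables on $X_{t_0}$; translating fibres from the reference fibre $f^{-1}(\phi_i(1)) \cap B$ along this flow trivializes $X_{t_0}$ as a product over $\phi_i([t_0, 1])$. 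Assembling across $t_0 \to 0^+$ and then taking the disjoint union across $i$ will yield the desired trivial fibration.

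The main obstacle is verifying that the trivializations on the pieces $X_{t_0}$ glue into a single homeomorphism over the whole $\CC_i$: compatibility as $t_0$ decreases follows from the uniqueness of integral curves of $\vec w$ on overlapping $X_{t_0}$'s, while continuity of the inverse of the limiting bijection reduces to the uniform control on $\vec w$ provided by rugosity on each compact piece. These are standard consequences of Verdier's framework once the stratified submersion property above has been secured, so the real work is concentrated in choosing $\delta$ so that every branch lies in the transversal locus and in carrying out the tubular-neighbourhood extension of $\phi_i'$.
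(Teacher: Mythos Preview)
Your proof is correct and follows essentially the same route as the paper: establish subanalyticity of the relevant preimage, equip it with a strong Whitney stratification (Proposition~\ref{estratificacao}), invoke Proposition~\ref{teosubmersaodosestratos} to obtain transversality over an open dense subset of the curve, shrink $\delta$ so that the punctured curve lies entirely in this good locus, and then lift a tangent vector field via Proposition~\ref{levantamentodecampor} and integrate to trivialize. The only cosmetic differences are that you frontload the branch decomposition via the conical structure and argue the bad locus is $0$--dimensional by a dimension count (whereas the paper reaches the same conclusion through the curve selection lemma), and that your lift should, as in the paper, be taken with target $Z=\CC_i$ directly rather than via an extension of $\vec v$ to $\R^n$, since Proposition~\ref{levantamentodecampor} asks for a submersion onto the smooth analytic target.
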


\begin{proof}
To simplify the notation, set $\CC^* := \CC \backslash \{y\}$ and $\mathring{\BD}_\eta := \mathring{\BD}_\eta(y)$, the open ball of radius $\eta$ around $y$ in $\R^n$. By $(i)$ of Proposition \ref{propriedades}, we have that $\CC^*$ is subanalytic in $\R^n$ so $\CC^* \cap \mathring{\BD}_\eta$ is subanalytic in $\R^n$, for any $\eta>0$.

Notice that $X \cap B$ is subanalytic in $\R^n$ and that $f(X \cap B)$ is bounded, since $B$ is compact and $f$ extends to a continuous map $\tilde{f}: \R^m \to \R^n$. In fact, $\tilde{f}(B)$ is bounded and $f(X\cap B) \subset  \tilde{f}(B)$.

We can embed $\CC$ into $\R^m$ by means of the canonical inclusion map $i(x_1, \dots, x_n) := (x_1, \dots, x_n, 0, \dots, 0)$. It is easy see that $\tilde{\CC}:= i(\CC)$ is subanalytic in $\R^m$. The same is true for $\tilde{D} := i (\mathring{\BD}_\eta)$. Moreover, we can extend the map $f: X \to Y$ to a subanalytic map $h: X \to \R^m$ by setting $h(x) := \big( f(x), 0 \big) \in \R^n \times \R^{m-n}$. Note that $h^{-1} \big( \tilde{\CC} \backslash \{(y,0)\} \cap \tilde{D} \big) = f^{-1}(\CC^* \cap \mathring{\BD}_\eta)$ and that $h(X \cap B)$ is bounded. So $(iii)$ of Proposition \ref{propriedades} gives that $f^{-1}(\CC^* \cap \mathring{\BD}_\eta)$ is subanalytic in $\R^m$, and hence $f^{-1}(\CC^* \cap \mathring{\BD}_\eta) \cap B$ is subanalytic in $\R^m$. 

So it follows from Proposition \ref{estratificacao} that there is a strong Whitney stratification $\mathcal{S} = \{\mathcal{S}_\a\}$ of $f^{-1}(\CC^* \cap \BD_\eta) \cap B$, for any $\eta>0$.

Notice that $f_{|}: f^{-1}(\CC^* \cap \mathring{\BD}_\eta) \cap B \longrightarrow \CC^* \cap \mathring{\BD}_\eta$ is proper. Moreover, by Proposition \ref{estratificacao} we can take $\eta>0$ sufficiently small such that $\CC^* \cap \mathring{\BD}_\eta$ is analytic. So Proposition \ref{teosubmersaodosestratos} gives that there exists an open set $U$ in $\CC^* \cap \mathring{\BD}_\eta$, dense in $\CC^* \cap \mathring{\BD}_\eta$, which is a smooth subspace of $\R^n$ and subanalytic in $\R^n$, such that the restriction $f_{|U}: f^{-1}(U) \cap B \longrightarrow U$ is transversal to $\mathcal{S} \cap f^{-1}(U) \cap B$.

Now, since $y \in \overline{U} \backslash U$, it follows from Proposition \ref{lema_selecao} that there exists a subanalytic map $\phi: [0,1] \longrightarrow \overline{U}$ such that $\phi \big( (0, 1] \big)  \subset U$, $\phi(0) = y$ and $\phi_{| (0,1]}$ is an analytic embedding. Hence we can take $\eta>0$ sufficiently small such that $U = \CC^* \cap \mathring{\BD}_\eta$, which is a smooth path in $\R^n$. 

Therefore the restriction $f_{|}: f^{-1}(\CC^* \cap \mathring{\BD}_\eta) \cap B \longrightarrow \CC^* \cap \mathring{\BD}_\eta$ is transversal to $\mathcal{S} \cap f^{-1}(\CC^* \cap \BD_\eta) \cap B$. So now we can finally prove that it is a topological locally trivial fibration. 

Let $\vec{v}$ be a smooth vector field in $\CC^* \cap \mathring{\BD}_\eta$ that goes to zero in a finite time. By Proposition \ref{levantamentodecampor} we can to lift $\vec v$ to a rugose (and hence integrable) stratified vector field $\vec w$ in $f^{-1}(\CC^* \cap \mathring{\BD}_\eta) \cap B$ that gives the trivialization of $f_{|}: f^{-1}(\CC^* \cap \mathring{\BD}_\eta) \cap B \longrightarrow \CC^* \cap \mathring{\BD}_\eta$. Then the theorem follows taking $\delta<\eta$.
\end{proof}

%%%
\begin{example} 
Consider the continuous subanalytic map $f: \R^3 \to \R^2$ given by:
$$
f(x,y,z) = \left\{
\begin{array}{lcl}
\big( (x+y)^2, 2(x^2+y^2+z^3) \big) & \mbox{if} & (x+y) \geq 0\\
\big( -(x+y)^2, 2(x^2+y^2+z^3) \big) & \mbox{if} & (x+y) < 0
\end{array}
\right.
\, .$$
By Proposition \ref{prop_1}, for any compact set $B \subset \R^m$, for any one-dimensional subanalytic curve $\CC \subset \R^2$ and for any $y \in \CC$, there exists $\delta>0$ sufficiently small such that the restriction:
$$f_|: f^{-1} \big( (\CC \backslash \{y\}) \cap \BD_\delta(y) \big) \cap B \to (\CC \backslash \{y\}) \cap \BD_\delta(y)$$
is the projection of a topological locally trivial fibration. One can easily check that $f^{-1}(t_1,t_2)$ is homeomorphic to $\R$, for any $(t_1,t_2) \in \R^2$.
\end{example}

We also suggest the reader to see the smooth non-analytic map $\Psi: \R^m \to \R^2$ gave in Section 5.2 of \cite{CMSS}. 

\medskip
Now let us consider the higher dimensional case. We have:

%%%
\begin{prop} \label{prop_2}
Let $f: X \to Y$ be as above and let $B \subset \R^m$ be a compact subanalytic set and let $Z \subset \R^n$ be a subanalytic set contained in $f(X \cap B)$. There exists a subset $W \subset Z$, subanalytic in $\R^n$, with $\dim W< \dim Z$, such that the restriction:
$$f_|: f^{-1}(Z \backslash W) \cap B \to Z \backslash W$$
is the projection of a topological locally trivial fibration. 
\end{prop}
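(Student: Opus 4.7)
The strategy is to mirror the proof of Proposition~\ref{prop_1} but with a higher-dimensional parameter space: first reduce to the subanalytic setting, then apply the Verdier generic submersion theorem, and finally trivialize using rugose lifts of vector fields.

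First, I would verify that $f^{-1}(Z) \cap B$ is a compact subanalytic subset of $\R^m$. Embedding $Z$ into $\R^m$ via the canonical inclusion $i(x_1,\dots,x_n)=(x_1,\dots,x_n,0,\dots,0)$ and extending $f$ to a subanalytic map $h: X \to \R^m$ by $h(x):=(f(x),0)$, exactly as in the proof of Proposition~\ref{prop_1}, the set $h(X\cap B)$ is bounded because $B$ is compact and $\tilde f$ is continuous. Part $(iii)$ of Proposition~\ref{propriedades} then yields that $f^{-1}(Z) = h^{-1}(i(Z))$ is subanalytic in $\R^m$, and intersecting with the compact subanalytic set $B$ keeps both properties.

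Next, I would apply Proposition~\ref{estratificacao} to obtain a strong Whitney stratification $\mathcal{S}=\{\mathcal{S}_\a\}$ of $f^{-1}(Z)\cap B$. The restriction $f_|: f^{-1}(Z)\cap B \to Z$ is proper because its source is compact, so Proposition~\ref{teosubmersaodosestratos} provides an open subset $U \subset Z$, dense in $Z$, smooth and subanalytic in $\R^n$, such that the restriction $f_|: f^{-1}(U)\cap B \to U$ is transversal to $\mathcal{S}\cap f^{-1}(U)\cap B$. Setting $W := Z\setminus U$, part $(i)$ of Proposition~\ref{propriedades} gives that $W$ is subanalytic in $\R^n$, and since $U$ is open and dense in the subanalytic set $Z$, the complement $W$ is nowhere dense, which forces $\dim W < \dim Z$.

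To obtain the locally trivial fibration, fix $y_0 \in U$. Because $U$ is a smooth subanalytic subset of $\R^n$, I can choose an open neighborhood $V$ of $y_0$ in $U$ that is $\mathcal{C}^\infty$-diffeomorphic to an open ball in $\R^d$ with $d=\dim Z$, together with smooth vector fields $\vec v_1,\dots,\vec v_d$ on $V$ whose flows jointly define a smooth trivialization of $V$ over $\{y_0\}$. By Proposition~\ref{levantamentodecampor}, each $\vec v_i$ admits a rugose stratified lift $\vec w_i$ to $f^{-1}(V)\cap B$, and Verdier's integrability theorem (Proposition~4.8 of \cite{Verdier}) makes each $\vec w_i$ integrable. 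Composing the resulting stratum-preserving flows produces a homeomorphism $f^{-1}(V)\cap B \cong V \times (f^{-1}(y_0)\cap B)$ over $V$, which is the required local trivialization.

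The main obstacle is the last step: although the preceding machinery is tailored for exactly this purpose, some care is needed to ensure that the iterated flows of the $\vec w_i$ remain inside $f^{-1}(V)\cap B$ over a common time interval and assemble into a single continuous trivialization. Shrinking $V$ around $y_0$, and using the fact that $f_|$ is proper on the compact set $B$, lets one control the domain of definition of the flows and glue them into a well-defined homeomorphism; this is the key technical point that upgrades the generic transversality of Proposition~\ref{teosubmersaodosestratos} into an honest topological local triviality over $Z\setminus W$.
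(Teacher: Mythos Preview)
Your proof is correct and follows the same route as the paper's: verify that $f^{-1}(Z)\cap B$ is compact subanalytic, endow it with a strong Whitney stratification, apply Proposition~\ref{teosubmersaodosestratos} to obtain a smooth open dense $U\subset Z$ over which $f_|$ is transversal, set $W:=Z\setminus U$, and trivialize via Proposition~\ref{levantamentodecampor}. The only cosmetic difference is that the paper first stratifies $Z$ and strips off its lower-dimensional strata (calling their union $\tilde W$) before invoking Proposition~\ref{teosubmersaodosestratos} on the smooth set $Z\setminus\tilde W$, then takes $W=\tilde W\cup U^c$; since the theorem already returns a smooth $U$, your shortcut is harmless, and you actually give more detail than the paper on the multi-vector-field trivialization over a higher-dimensional base.
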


\begin{proof}
Consider a strong Whitney stratification $\mathcal{Z} = \{ \mathcal{Z}_\b \}$ of $Z$ and let $\tilde{W}$ be the union of all the strata $\mathcal{Z}_\b$ such that $\dim \mathcal{Z}_\b < \dim Z$. Then $Z \backslash \tilde{W}$ is real analytic, smooth and subanalytic in $Z$.

Following as in the first three paragraphs of the proof of Proposition \ref{prop_1}, one can see that $f^{-1}(Z \backslash \tilde{W}) \cap B$ is subanalytic in $\R^m$. So it follows from Proposition \ref{estratificacao} that $f^{-1}(Z \backslash \tilde{W})  \cap B$ admits a strong Whitney stratification $\mathcal{S} = \{\mathcal{S}_\a\}$.

So applying Proposition \ref{teosubmersaodosestratos} to the restriction $f_|: f^{-1}(Z \backslash \tilde{W}) \cap B \to Z \backslash \tilde{W}$ we obtain an open set $U$ in $Z \backslash \tilde{W}$, dense in $Z \backslash \tilde{W}$, which is a smooth subspace of $\R^n$ and subanalytic in $\R^n$, such that the restriction $f_{|U}: f^{-1}(U) \cap B \longrightarrow U$ is transversal to $\mathcal{S} \cap f^{-1}(U) \cap B$.

Let $U^c$ be the complement of $U$ in $Z \backslash \tilde{W}$. Hence $U^c$ is subanalytic in $\R^n$. Therefore the disjoint union given by $W:= \tilde{W} \cup U^c$ is subanalytic in $\R^n$ and $\dim W< \dim Z$.

So the restriction $f_|: f^{-1}(Z \backslash W) \cap B \to Z \backslash W$ is transversal to $\mathcal{S} \cap f^{-1}(U) \cap B$. So proceeding as in the last paragraph of the proof of Proposition \ref{prop_1}, one gets that it is the projection of a topological locally trivial fibration. 
\end{proof}

\begin{remark}
Notice that if $\CC$ is a subanalytic curve and $y \in \CC$, then one can apply Proposition \ref{prop_2} to the set $Z:= \CC \cap \BD_\eta(y)$. One can check that, for $\eta>0$ sufficiently small, then either $W= \emptyset$ or $W= \{y\}$. This gives Proposition \ref{prop_1}.
\end{remark}

Applying Proposition \ref{prop_2} repeatedly, we get Theorem \ref{theo_1}.

%%%%%%%%%%%%%%%%%%%%%%%%%%%%%%%%%%%%%
\medskip
\section{Fibration theorems for analytic maps defined on subanalytic sets}  
\medskip

Let $X \subset \R^m$ and $Y \subset \R^n$ be subanalytic sets and let $f :X \to Y$ be a subanalytic map that extends to an analytic map $\tilde{f}: \R^m \to \R^n$.

Let $B \subset \R^m$ be a compact subanalytic set and define the restriction:
$$f_B: X \cap B \to Y \, .$$

Let $\mathcal{Y}= \{ \mathcal{Y}_\b \}_{\b \in \Omega}$ be a strong Whitney stratification of $Y$. We can consider a strong Whitney stratification $\mathcal{S}= \{ \mathcal{S}_\a \}_{\a \in \Gamma}$ of $X \cap B$ such that for any $\a \in \Gamma$ one has that $f_B(\mathcal{S}_\a) \subset \mathcal{Y}_\b$, for some $\b = \b(\a) \in \Omega$. 

Set $k:= \dim Y$ and for each $i=1, \dots, k$ set: 
$$\Omega_i := \{\b \in \Omega; \ \dim \mathcal{Y}_\b = i \}  \, .$$
and
$$Y_i := \cup_{\b \in \Omega_i} \mathcal{Y}_\b  \, .$$
Since the stratification $\mathcal{Y}$ satisfies the boundary condition, one has that each $Y_i$ is a smooth submanifold of $\R^n$.

Now, for each $i=1, \dots, k$ set: 
$$\Gamma_i := \{\a \in \Gamma; \  \b(\a) \in \Omega_i \}  \, .$$
So $\a \in \Gamma_i \Leftrightarrow f(\mathcal{S}_\a) \subset Y_i$. Then set:
$$X_i := \cup_{\a \in \Gamma_i} \mathcal{S}_\a  \, .$$
So $f(X_i) \subset Y_i$. Also set:
$$\Sigma_i:= \bigcup_{\a \in \Gamma_i} Crit(f_{| \mathcal{S}_\a})  \, ,$$
where $Crit(f_{| \mathcal{S}_a})$ denotes the set of points of $\mathcal{S}_\a$ where the corres\-ponding restriction $f_{| \mathcal{S}_\a}: \mathcal{S}_\a \to \mathcal{Y}_{\b(\a)}$ fails to be a submersion. Notice that $\Sigma_i \subset X_i$ is analytic. Moreover, for each $i=1, \dots, k$ set: 
$$\Delta_i := f_B(\Sigma_i) \subset Y_i \, ,$$
which is subanalytic in $\R^n$.

Finally, we have:

\begin{definition} \label{definition_1}
The singular set of $f_B$ is the set:
$$\Sigma_{f_B} := \bigcup_{i=1}^k \Sigma_i  \, ,$$
and the {\it discriminant set of $f_B$} is the set $\Delta_{f_B} := f_B(\Sigma_{f_B})$.
\end{definition}

Notice that $\Sigma_{f_B} \subset (X \cap B)$ is analytic and that $\Delta_{f_B} \subset Y$ is sub\-analytic in $\R^n$.

With the notation above, we have:

%%%
\begin{theo} \label{theo_2}
Let $f: X \to Y$ be a subanalytic map as above and let $B \subset \R^m$ be a compact subanalytic set. For each $i=1, \dots, k$ one has that:
\begin{itemize}
\item[$(i)$] $(Y_i \setminus \Delta_i) \cap f(X \cap B)$ is an open smooth submanifold of $Y_i$.
\item[$(ii)$] The restriction:
$$f_|: f^{-1}(Y_i \setminus \Delta_i) \cap B  \to  (Y_i \setminus \Delta_i) \cap f(X \cap B) $$
is the projection of a topological locally trivial fibration.
\end{itemize}
\end{theo}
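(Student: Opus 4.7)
The approach mirrors the strategy of Propositions \ref{prop_1} and \ref{prop_2}, combining subanalytic stratification with Verdier's rugose vector field lifting; the new point is that the transversality required for the fibration holds on all of $(Y_i \setminus \Delta_i) \cap f(X \cap B)$ by the very construction of $\Delta_i$, not just on a dense open subset as supplied by Proposition \ref{teosubmersaodosestratos}. For part $(i)$, given any $p \in (Y_i \setminus \Delta_i) \cap f(X \cap B)$, I select a preimage $x \in f^{-1}(p) \cap X_i \cap B$ with $x \notin \Sigma_i$, which exists because $p \notin \Delta_i$. Then $f_{|\mathcal{S}_{\a(x)}} : \mathcal{S}_{\a(x)} \to \mathcal{Y}_{\b(\a(x))}$ is a submersion at $x$, so its image contains an open neighborhood of $p$ in $\mathcal{Y}_{\b(\a(x))}$; since any $i$-dimensional stratum of the Whitney stratification of $Y$ is open in $Y_i$ (its frontier consisting of strictly lower-dimensional strata), this yields an open neighborhood of $p$ in $Y_i$ contained in $f(X \cap B)$. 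A compactness argument using that $B$ is compact, $\tilde{f}$ continuous, and each $\Sigma_i \cap \mathcal{S}_\a$ analytic then lets one shrink the neighborhood so as to avoid $\Delta_i$. Smoothness is automatic since $Y_i$ is itself a smooth submanifold of $\R^n$.

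For part $(ii)$, write $U_i := (Y_i \setminus \Delta_i) \cap f(X \cap B)$. First I would verify that $U_i$ and $f^{-1}(U_i) \cap B$ are subanalytic, using the graph-extension trick $h(x) := (f(x), 0) \in \R^n \times \R^{m-n}$ together with parts $(i)$ and $(iii)$ of Proposition \ref{propriedades} and compactness of $B$, exactly as in Proposition \ref{prop_1}. Then apply Proposition \ref{estratificacao} to equip $f^{-1}(U_i) \cap B$ with a strong Whitney stratification $\mathcal{T}$ refining the restriction of $\mathcal{S}$. The decisive observation is this: for every $x \in f^{-1}(U_i) \cap B$, the compatibility $f(\mathcal{S}_\a) \subset \mathcal{Y}_{\b(\a)}$ together with $f(x) \in Y_i$ forces $\a(x) \in \Gamma_i$, while $f(x) \notin \Delta_i$ forces $x \notin \Sigma_i$; hence $f_{|\mathcal{S}_{\a(x)}}$ is a submersion at every such $x$, and $f$ is transversal to $\mathcal{T}$ on the whole of $U_i$, not merely on a dense open subset. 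The restriction of $f$ to $f^{-1}(U_i) \cap B$ is proper by compactness. Local triviality then follows as in the last paragraph of the proof of Proposition \ref{prop_1}: choose local coordinate vector fields on $Y_i$ near any $p \in U_i$, lift each to a rugose stratified vector field via Proposition \ref{levantamentodecampor}, integrate by Verdier's theorem (Proposition 4.8 of \cite{Verdier}), and compose the resulting flows to obtain a trivialization $f^{-1}(V) \cap B \cong V \times (f^{-1}(p) \cap B)$ over a small coordinate ball $V \subset U_i$ around $p$.

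The most delicate step I anticipate is the closedness argument in part $(i)$, since $\Sigma_i$ is only stratum-wise analytic and not obviously closed in $X \cap B$: a sequence of critical points $x_n \in \Sigma_i \cap B$ could accumulate at a point $x_\infty$ of a lower-dimensional stratum, or even at a point of $\overline{X} \setminus X$, with $\tilde{f}(x_\infty) = p \in U_i$ a priori possible. Ruling this out will require exploiting the Whitney conditions on $\mathcal{S}$ to control how stratum-wise submersion behavior extends across adjacent strata, together with compactness of $B$ and the analytic structure of each $\Sigma_i \cap \mathcal{S}_\a$. If this proves stubborn, one can instead refine $\mathcal{S}$ so that the closure of $\Sigma_i$ in $X \cap B$ is a union of strata on each of which the stratum-wise critical locus is already the whole stratum, making the avoidance of $\Delta_i$ near $p$ immediate.
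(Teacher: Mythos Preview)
Your argument for part $(ii)$ is essentially the paper's: once transversality on all of $f^{-1}(Y_i\setminus\Delta_i)\cap B$ is secured by the definition of $\Delta_i$, one lifts local vector fields on the target via Proposition~\ref{levantamentodecampor} and integrates. The paper does not bother refining $\mathcal{S}$ to a new stratification $\mathcal{T}$; it simply observes that the restriction of the given $\mathcal{S}$ is already transversal and invokes Verdier directly.

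For part $(i)$ your route differs slightly. The paper argues topologically: $f_B$ is proper and closed, so $f(X\cap B)$ is closed in $Y$; it asserts that $\Sigma_{f_B}$ is closed in $X\cap B$, hence $\Delta_{f_B}$ and each $\Delta_i$ are closed, making $Y_i\setminus\Delta_i$ open in $Y_i$; finally it notes that the topological boundary of $f(X\cap B)\cap Y_i$ inside $Y_i$ lies in $\Delta_i$, so the intersection with $Y_i\setminus\Delta_i$ is open. You instead produce the open neighborhood of $p$ directly from the submersion at a chosen preimage. Both arguments are correct and in fact dual to one another (the paper's boundary-point claim is your submersion observation in the contrapositive), and both ultimately rest on the closedness of $\Delta_i$ in $Y_i$---precisely the ``delicate step'' you flag. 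The paper states this closedness without further justification; your proposed fallback (refine $\mathcal{S}$ via Proposition~\ref{estratificacao} so that the stratum-wise rank of $f$ is constant, making $\Sigma_{f_B}$ a union of strata and hence closed) is the standard way to make the assertion rigorous, and is what implicitly underlies the paper's claim.
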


\begin{proof}
Since $f_B$ is a continuous map from a compact space to a Hausdorff space, it is proper and closed. Therefore $f(X \cap B)$ is a closed subset of $Y$, which implies that $f(X \cap B) \cap Y_i$ is a closed subset of $Y_i$. On the other hand, $\Sigma_{f_B}$ is a closed subset of $X \cap B$, and hence $\Delta_{f_B}$ is closed in $Y$, which implies that $\Delta_i = \Delta_{f_B} \cap Y_i$ is closed in $Y_i$. So $Y_i \setminus \Delta_i$ is an open set of $Y_i$.

Clearly, the boundary points (as a topological space) of $f(X \cap B) \cap Y_i$ are contained in $\Delta_i$. Therefore $f(X \cap B) \cap (Y_i \setminus \Delta_i)$ is an open subset of $Y_i$. But since $Y_i$ is smooth, it follows that $f(X \cap B) \cap Y_i \setminus \Delta_i$ is an open submanifold of $Y_i$. This proves $(i)$.

One can check that $f^{-1}(Y_i \setminus \Delta_i) \cap B$ is subanalytic in $X$. Moreover, the restriction $f_|: f^{-1}(Y_i \setminus \Delta_i) \cap B  \to  (Y_i \setminus \Delta_i) \cap f(X \cap B) $ is transversal to the stratification $\mathcal{S} \cap f^{-1}(Y_i \setminus \Delta_i) \cap B$ of $f^{-1}(Y_i \setminus \Delta_i) \cap B$ induced by $\mathcal{S}$. So $(ii)$ follows from Proposition \ref{levantamentodecampor}. 
\end{proof}

Notice that $(i)$ of Theorem \ref{theo_main} follows immediately from Theorem \ref{theo_2}, putting $Y=\R^n$. 

Moreover, Theorem \ref{theo_2} together with Theorem \ref{theo_1} give:

%%%
\begin{theo} \label{theo_3}
Let $X \subset \R^m$ and $Y \subset \R^n$ be subanalytic sets and let $f :X \to Y$ be a subanalytic map that extends to an analytic map $\R^m \to \R^n$. Let $B \subset \R^m$ be a compact subanalytic set. For each $i=1, \dots, k$ such that $\dim \Delta_i >0$, there exist subsets $W_0^i, \dots, W_{l_i}^i$, with $1 \leq l_i \leq \dim \Delta_i$, such that: 
\begin{itemize} 
\item[$(i)$] $W_0^i \subset W_1^i \subset \dots \subset W_{l_i-1}^i \subset W_{l_i}^i = \Delta_i$.
\item[$(ii)$] $W_j^i$ is subanalytic in $\R^n$, for each $j=0, \dots, l_i$.
\item[$(iii)$] $\dim W_{j-1}^i < \dim W_j^i$, for each $j=1, \dots, l_i$.
\item[$(iv)$] $W_j^i \setminus W_{j-1}^i$ is a smooth subset of $\R^n$ and subanalytic in $\R^n$, for each $j=1, \dots, l_i$.
\item[$(v)$] The restriction:
$$f_|: f^{-1}(W_j^i \backslash W_{j-1}^i) \cap B \to W_j^i \backslash W_{j-1}^i$$
is the projection of a topological locally trivial fibration, for each $j=1, \dots, l_i$. 
\end{itemize}
\end{theo}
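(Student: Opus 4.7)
The plan is to deduce this theorem as a direct application of Theorem \ref{theo_1} to $Z = \Delta_i$, one $i$ at a time. The substantive work is confirming that each $\Delta_i$ qualifies as a valid choice for the set called $Z$ in the hypotheses of Theorem \ref{theo_1}.

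First I would verify the conditions required of $Z$ in Theorem \ref{theo_1}: it should be subanalytic in $\R^n$, contained in $f(X \cap B)$, and of positive dimension. The last is exactly the hypothesis $\dim \Delta_i > 0$. The inclusion $\Delta_i \subset f(X \cap B)$ holds by construction, since $\Delta_i = f_B(\Sigma_i)$ with $\Sigma_i \subset X \cap B$. For subanalyticity, I would recall that $\Sigma_i$ is analytic in $X \cap B$ (as noted immediately after its definition, being a finite union of critical loci $Crit(f_{|\mathcal{S}_\a})$), and then argue as in the opening paragraphs of the proof of Proposition \ref{prop_1}: the compactness of $B$ forces $\tilde f(B)$ to be bounded, so $f_B(\Sigma_i) \subset \tilde f(B)$ is bounded, and Proposition \ref{propriedades}$(ii)$ applied to the subanalytic map $f_B$ restricted to the subanalytic set $\Sigma_i$ gives that $\Delta_i = f_B(\Sigma_i)$ is subanalytic in $\R^n$.

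With these verifications in hand, I would fix each $i$ with $\dim \Delta_i > 0$ and invoke Theorem \ref{theo_1} with $Z := \Delta_i$. It produces subsets $W_0^i \subset W_1^i \subset \cdots \subset W_{l_i}^i = \Delta_i$ with $1 \leq l_i \leq \dim \Delta_i$ satisfying properties $(i)$–$(v)$ of Theorem \ref{theo_1}; those coincide verbatim with conclusions $(i)$–$(v)$ of Theorem \ref{theo_3}, so a simple relabelling indexed by $i$ ends the argument.

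I do not expect any genuine obstacle here: Theorem \ref{theo_1} is the engine, and all that needs checking is that the stratified discriminant $\Delta_i$, as introduced in Definition \ref{definition_1}, falls under its hypotheses. The mildly delicate point is the subanalyticity of $\Delta_i$ in $\R^n$—not automatic for images of subanalytic sets under subanalytic maps—which here is guaranteed by the compactness of $B$ via Proposition \ref{propriedades}$(ii)$. Once this is observed, the proof is really a corollary of Theorem \ref{theo_1}, which is itself obtained by iterating Proposition \ref{prop_2}.
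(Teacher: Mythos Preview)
Your proposal is correct and matches the paper's approach: the paper states Theorem~\ref{theo_3} as an immediate consequence of Theorem~\ref{theo_1} (together with the setup leading to Theorem~\ref{theo_2}), without further argument. Your write-up is in fact more explicit than the paper's, since you spell out the verification that each $\Delta_i$ satisfies the hypotheses of Theorem~\ref{theo_1}; the paper simply records, just before Definition~\ref{definition_1}, that $\Delta_i$ is subanalytic in $\R^n$ and leaves the rest tacit.
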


\medskip
Putting $Y=\R^n$ one gets $(ii)$ of Theorem \ref{theo_main}.

%%%
\medskip
\begin{example}
Let $\tilde{f}: \R^4 \to \R^2$ be the real analytic map given by: 
$$\tilde{f}(x,y,z,w) := \big( (x+y)^2, 2(x^2+y^2+z^2 + w^3) \big) \, .$$
Let $X \subset \R^4$ be the analytic set given by $X=\{x^2-y^2=0\}$ and consider the analytic map $f:X \to \R^2$ given by the restriction of $\tilde{f}$. 

The singular set of $X$ is the set $\Sigma(X):= \{x=y=0\}$ and we have a strong Whitney stratification of $X$ given by the strata $\mathcal{S}_1 := X \backslash \Sigma(X)$ and $\mathcal{S}_2 := \Sigma(X)$. An easy calculation shows that: 
$$\Sigma_f = \{x+y=0\} \cup \{x-y=z=w=0\} \ ,$$
so the discriminant of $f$ is given by:
$$\Delta_f = \{u=0\} \cup \{u-v=0; u \geq 0\} \, .$$

Theorem \ref{theo_main} gives that for any $\e>0$ there exists $\eta>0$ sufficiently small such that the restrictions:
$$f_|: f^{-1}(\R^2 \setminus \Delta_f) \cap \B_\e  \to  (\R^2 \setminus \Delta_f) \cap f(X \cap \B_\e) $$
and
$$f_|: f^{-1} \big( (\Delta_f \backslash \{0\}) \cap \BD_\eta \big) \cap \B_\e \to (\Delta_f \backslash \{0\}) \cap \BD_\eta$$
are projections of topological locally trivial fibrations.

It is easy to see that, for any $\e>0$, one has that: 
\begin{itemize}
\item[$(i)$] If $t_1>0$ and $t_2 \neq t_1$ then $f^{-1}(t_1,t_2) \cap \B_\e$ is a smooth manifold homeomorphic to two disjoint copies a closed interval;
\item[$(ii)$] If $t_1>0$ and $t_2=t_1$ then $f^{-1}(t_1,t_2)$ is the singular surface given by $\{x-y=0\} \cap \{z^2 + w^3=0\}$, which is homeomorphic to a closed disk $\D^2$; 
\item[$(iii)$] If $t_1=0$ and $t_2 \neq 0$ then $f^{-1}(t_1,t_2)$ is the non-singular surface given by $\{x+y=0\} \cap \{4x^2 +2z^2 + 2w^3= t_2\}$, which is homeomorphic to either $\D^2 \backslash \{0\}$ or to two disjoint copies of $\D^2$;
\item[$(iv)$] If $t_1=t_2=0$ then $f^{-1}(t_1,t_2)$ is the singular surface given by $\{x+y=0\} \cap \{4x^2 +2z^2 + 2w^3= 0\}$, which is homeomorphic to the cone over two circles.
\end{itemize}
\end{example}

Next, we give an example where it is possible to explicit the set $W$ of Theorem \ref{theo_main}.

%%%
\medskip
\begin{example}
Let $f: \R^m \to \R^n$ be a real analytic map given by: 
$$f(x_1, \dots, x_m) := \big( \sum_{j=1}^m a_{1j} x_j^p, \dots, \sum_{j=1}^m a_{ij} x_j^p, \dots, \sum_{j=1}^m a_{nj} x_j^p  \big)  \, ,$$
with $m \geq n \geq 2$ and $p\geq 2$ integer. Suppose that the coefficients $a_{ij}$ are generic enough (which means that any collection of $n$-many vectors $\vec{a}_j := (a_{1j}, \dots, a_{nj})$, with $j \in \{1, \dots, m\}$, is linearly independent, and that $0 \in \R^n$ is in the convex hull of the vectors $\vec{a}_j$).

Such family is a generalization of the homogeneous quadratic mappings $\R^m \to \R^2$ studied by S.L. de Medrano in \cite{Santiago}. In that paper, de Medrano considered the case $n=p=2$. He showed that there is a local Milnor-L\^e type fibration outside the discriminant of $f$, which is the union of the $m$-many line-segments through the origin and the points $(a_{1j}, a_{2j})$ in $\R^2$. Moreover, he completely described the topology of the associated fiber in terms of the configuration of the points $(a_{1j}, a_{2j})$ in $\R^2$.

In our general situation, the critical set $Crit(f)$ of $f$ is given by the union of the $(n-1)$-dimensional planes:
$$\Sigma_{j_1, \dots, j_k, \dots, j_{n-1}} := \{x_j=0, \ \forall j=1, \dots, m \ \text{with} \ j \neq j_k , \ k=1, \dots, n-1 \} \, ,$$
with $j_1, \dots, j_{n-1}$ distinct indices in $\{1, \dots, m\}$. These sets consist of the points of $\R^m$ where the Jacobian matrix of $f$ has rank less than $n$.

We can also set:
$$\widetilde{\Sigma}_{j_1, \dots, j_k, \dots, j_{n}} := \{x_j=0, \ \forall j=1, \dots, m \ \text{with} \ j \neq j_k , \ k=1, \dots, n \} \, ,$$
which are $(n-2)$-dimensional planes formed by the points of $\R^m$ where the Jacobian matrix of $f$ has rank less than $n-1$.

One can check that for any sufficiently small closed ball $\B_\e$ around $0$ in $\R^m$ there exists a sufficiently small open ball $\mathring{\B}_\delta$ around $0$ in $\R^n$ such that the singular set $\Sigma_{f_B}$ of the restriction: 
$$f_B: f^{-1}(\mathring{\B}_\delta) \cap \B_\e \to \mathring{\B}_\delta$$ 
of $f$ equals $Crit(f) \cap \B_\e$.

So the discriminant set $\Delta_{f_B} \subset \mathring{\B}_\delta$ is given by the union of the following parametrized hyperplanes $\Delta_{j_1, \dots, j_k, \dots, j_{n-1}}$ in $\R^n$:
$$(t_1, \dots, t_{n-1}) \mapsto \big( a_{1j_1} t_1^p + \dots + a_{1j_{n-1}} t_{n-1}^p, \dots, a_{nj_1} t_1^p + \dots + a_{nj_{n-1}} t_{n-1}^p  \big)   \, .$$

Since $\Delta_{f_B}$ is a finite union of hyperplanes in $\R^n$, its singular set is a finite union of $(n-2)$-dimensional planes, which we will denote by $\Sigma_\Delta$. Also notice that $f(\widetilde{\Sigma}_{j_1, \dots, j_k, \dots, j_{n}})$ is contained in $\Sigma_\Delta$. So $f^{-1}(\Delta_{f_B} \backslash \Sigma_\Delta) \cap \mathring{\B}_\e$ is a smooth manifold and the restriction of $f$ to $f^{-1}(\Delta_{f_B} \backslash \Sigma_\Delta) \cap \mathring{\B}_\e$ is a submersion. Hence the restriction: 
$$f_|: f^{-1}(\Delta_{f_B} \backslash \Sigma_\Delta) \cap \B_\e \to \Delta_{f_B} \backslash \Sigma_\Delta$$ 
is the projection of a topological locally trivial fibration.
\end{example}

%%%%%%%%%%%%%%%%%%%%%%%%%%%%%%%%%%%%%
%%%%%%%%%%%%%%%%%%%%%%%%%%%%%%%%%%%%%
\medskip


\begin{thebibliography}{99}

\bibitem{Milman-Bierstone} E. Bierstone and P. D. Milman, \it{Semianalytic and subanalytic sets}.  Mathematiques de l'I.H.\`E.S., 67, p. 5-42 (1988).

\bibitem{CMSS}J.~L. Cisneros-Molina, A. Menegon Neto, J.~Seade, and J.~Snoussi, \emph{Fibration theorems and $d_h$-regularity}. Preprint.

\bibitem{CSG} J.~L. Cisneros-Molina, J.~Seade and N.G.~Grulha Jr., \emph{On the topology of real analytic maps}. Int. Journal of Math., Vol. 25, No. 7 (2014).

\bibitem{Coste} M. Coste, \it{An introduction to o-minimal geometry}, https://perso.univ-rennes1.fr/michel.coste/polyens/OMIN.pdf (1999).

\bibitem{Ha} H. Hamm, \it{Lokale topologische Eigenschaften komplexer Raume}, Math. Ann. 191, 235-252 (1971).

\bibitem{Le}D.~T. L\^e, \it{Some remarks on relative monodromy}, Real and complex singularities (Proc. Ninth Nordic Summer School, Sympos. Math., Oslo, p. 397-403, (1976). Sijthoff and Noordhoff, Alphen aan den Rijn, 1977.

\bibitem{Ma} D. Massey, \it{The L\^e Varieties}, II, Invent. Math. 104 (1991), 113-148.

\bibitem{Santiago} S. L. Medrano. \emph{Singularities of homogeneous quadratic mappings}. Rev. R. Acad. Cienc. Exactas Fis. Nat. Ser. A Math. RACSAM, 108(1):95-112 (2014).

\bibitem{Mi} J.W. Milnor, {\it Singular points of complex hypersurfaces}, Ann. of Math. Studies 61, Princeton, 1968.

\bibitem{Shiota} M. Shiota, \emph{Geometry of Subanalytic and Semialgebraic Sets}. Springer Science Business Media, New York, 1997.

\bibitem{Verdier} J-L. Verdier, \emph{Stratification de Whitney et Théorème de Bertini-Sard}. Invent. Math., 36, p. 295-312, 1976.




\end{thebibliography}
\end{document}